\newcommand{\EE}{\mathbb{E}}
\newcommand{\NN}{\mathbb{N}}
\newcommand{\PP}{\mathbb{P}}
\newcommand{\RR}{\mathbb{R}}
\newcommand{\D}{\mathrm{d}}
\newcommand{\ds}{\mathrm{d}s}
\newcommand{\dr}{\mathrm{d}r}
\newcommand{\dt}{\mathrm{d}t}
\newcommand{\du}{\mathrm{d}u}
\newcommand{\dx}{\mathrm{d}x}
\newcommand{\E}{\mathrm{e}}
\newcommand{\Ff}{\mathcal{F}}
\newcommand{\Ii}{\mathcal{I}}
\newcommand{\Ll}{\mathcal{L}}
\newcommand{\ep}{\varepsilon}
\newcommand{\half}{\frac{1}{2}}
\newcommand{\one}{\mathbbm{1}}
\newcommand{\abs}[1]{\left\lvert#1\right\rvert}
\newcommand{\norm}[1]{\left\lVert#1\right\rVert}
\newcommand{\notthis}[1]{}
\newtheorem{theorem}{Theorem}[section]
\newtheorem{corollary}[theorem]{Corollary}
\newtheorem{lemma}[theorem]{Lemma}
\newtheorem{proposition}[theorem]{Proposition}
\theoremstyle{definition}
\newtheorem{definition}[theorem]{Definition}
\newtheorem{remark}[theorem]{Remark}
\newtheorem{example}[theorem]{Example}
\theoremstyle{plain}
\numberwithin{equation}{section}
\definecolor{ocean}{rgb}{0,0.1,0.6}
\definecolor{imperialGreen}{RGB}{2,137,59}
\definecolor{imperialBlue}{RGB}{0, 62, 116}
\definecolor{imperialBrick}{RGB}{165,25,0}
\definecolor{imperialProcess}{RGB}{0,133,202}
\title{A BDG inequality for stochastic Volterra integrals}
\author{Alexandre Pannier}
\date{\today}
\address{LPSM, Université Paris Cité}
\email{pannier@lpsm.paris}
\thanks{}
\subjclass[2010]{60G22, 60H05}
\keywords{BDG inequality, stochastic Volterra integrals, stochastic Volterra equations, multifactor approximation}
\begin{document}

\begin{abstract}
    We establish Burkholder-Davis-Gundy-type inequalities for stochastic Volterra integrals with a completely monotone convolution kernel, which may exhibit singular behaviour at the origin. When the supremum is taken over a finite interval, the upper bound depends linearly on the $L^\gamma$-norm of the kernel, for any $\gamma>2$. We demonstrate the utility of this inequality in quantifying the pathwise distance between two stochastic Volterra equations with distinct kernels, with a particular emphasis on the multifactor Markovian approximation. For kernels that decay sufficiently fast, we derive an alternative inequality valid over an infinite time interval, providing uniform-in-time bounds for mean-reverting stochastic Volterra equations. Finally, we compare our findings with existing results in the literature.
\end{abstract}

\maketitle

\section{Introduction}
This note investigates a version of the Burkholder-Davis-Gundy (BDG) inequality tailored to stochastic Volterra integrals. Specifically, we examine the following inequality:
\begin{align}\label{eq:BDG_intro}
    \EE\left[\sup_{t\in[0,T]}\abs{\int_0^t K(t-s)\phi(s)\D W_s}^p\right] \le \bm{C} \int_0^T \EE\abs{\phi(s)}^p\ds,
\end{align}
where $p\ge2$, $T>0$ can be finite or infinite, $K\in L^2([0,T];\RR^{d\times d})$, $W$ is an $m$-dimensional Brownian motion, $\bm{C}>0$ is a constant to be determined later and $\phi$ is a previsible proces with values in~$\RR^{d\times m}$ such that the right-hand-side of the inequality is finite. The original BDG inequality, as a consequence of Doob's inequality, applies to local martingales and implies~$\EE[\sup_{t\in[0,T]}\lvert\int_0^t \phi(s)\D W_s\lvert^p]\le \bm{b}_p\EE[(\int_0^T \abs{\phi(s)}^2\ds)^{p/2}]$, where~$\bm{b}_p:=2p^{p/2}$ \cite[Remark 2]{carlen1991p}. Thus it recovers~\eqref{eq:BDG_intro}  when $K\equiv1$, after an application of Jensen's inequality and with the constant~$\bm{C}=T^{p/2-1}\bm{b}_p$. 
This inequality is a fundamental tool in stochastic analysis, playing a central role in deriving a priori estimates and ensuring the well-posedness of Itô stochastic differential equations, with the norm defined by~$\norm{X}_{\mathrm{s}}^p:=\EE[\sup_{t\in[0,T]}\abs{X_t}^p]$. We will now outline the motivation and potential applications that arise from incorporating the kernel~$K$ in the integral.

Stochastic Volterra processes are typically defined as
\begin{align}\label{eq:SVE}
    X_t= x_0(t) + \int_0^t K(t-s)\tilde{b}(s)\ds + \int_0^t K(t-s)\tilde{\sigma}(s)\D W_s, \quad t\ge0,
\end{align}
where the kernel $K$ embeds the intertemporal dependence of the system. Equation \eqref{eq:SVE} is called a Stochastic Volterra Equation (SVE) if there exist measurable functions~$b$ and $\sigma$ such that~$\tilde{b}(s)=b(X_s)$ and $\tilde{\sigma}(s)=\sigma(X_s)$ for all $s\ge0$. 
Stochastic Volterra processes form a widely studied class of models for systems exhibiting memory effects and/or fractional behavior. They gained significant traction in mathematical finance, particularly with the rise of rough volatility models \cite{bayer2023rough}. Beyond finance, these processes have found applications in diverse fields such as electricity price modeling \cite{barndorff2013modelling, bennedsen2022rough}, the study of turbulent flow velocities \cite{barndorff2008stochastic, chevillard2017regularized}, and even climate science \cite{eichinger2020sample}. The applications to rough volatility, in particular, have fueled substantial theoretical advancements, with several key contributions such as \cite{abi2019affine, keller2018affine, viens2019martingale} representing a few notable papers in the field.

Initially, the literature on Stochastic Volterra Equations (SVEs) focused on well-behaved kernels satisfying $K(0)<+\infty$ \cite{protter1985volterra}. However, recent developments have shifted attention to singular kernels for which~$\lim_{t\to0}K(t)=+\infty$. 
The analysis of these more challenging SVEs typically relies on a priori estimates and well-posedness results often using the norm~$\norm{X}_{\mathrm{w}}^p = \sup_{t\in[0,T]}\EE[\abs{X_t}^p]$. This approach is necessary because the classical BDG inequality does not directly apply to processes of the form $(\int_0^t K(t-s)\phi(s)\D W_s)_{t\in[0,T]}$, which  generally lack the local martingale property.  Nonetheless, the BDG inequality still plays a role in this context, allowing to derive the following estimate for $p>2$ (the case $p=2$ being a consequence of Itô's isometry)
\begin{align}\label{eq:BDG_no_sup}
   \EE\abs{\int_0^t K(t-s)\phi(s)\D W_s}^p
     \le \EE\,\left[\,\sup_{r\in[0,t]}\abs{\int_0^r K(t-s)\phi(s)\D W_s}^p\right]\,
     \le \bm{b}_p\EE\abs{\int_0^t \abs{K(t-s)}^2\abs{\phi(s)}^2\ds}^{\frac{p}{2}}.
\end{align}
However, this does not provide a pathwise estimate. The norm $\norm{X}_{\mathrm{s}}$ is recovered a posteriori through Kolmogorov's continuity theorem as demonstrated, for example, in \cite[Lemma 2.4]{abi2019affine}.
Despite being relatively unknown, there are at least two important versions of the BDG inequality for stochastic Volterra integrals that were instrumental in establishing the well-posedness of SVEs. The first, introduced by Decreusefond \cite{decreusefond2002regularity}, was applied to SVEs in \cite{coutin2001stochastic}. The second summarises the Kolmogorov continuity theorem approach \cite[Lemma 3.4]{zhang2010stochastic}. We believe these inequalities merit wider recognition, as they could significantly aid future research. As such, we provide a more detailed exposition of both at the end of this note. Moreover, these inequalities are also valid for non-convolution kernels, meaning they apply when~$K(t-s)$ is replaced with the more general $K(t,s)$ in~\eqref{eq:BDG_intro}. 

In a different setting, mild solutions to stochastic PDEs require a specialised BDG inequality~\cite[Lemma 3.3]{gawarecki2010stochastic} which takes the form of~\eqref{eq:BDG_intro} with the additional assumption that $K$ is a semigroup. Although this version is designed for an infinite-dimensional framework, it indicates a route for dealing with the non-trivial one-dimensional semigroup: the exponential function.

We thus leverage the conducive class of completely monotone kernels. A function $K:(0,\infty)\to[0,\infty)$ is called completely monotone if it is infinitely differentiable on $(0,\infty)$ and satisfies~$(-1)^n \frac{\D^n}{\D t^n}K(t)\ge 0$ for all $n\in\NN\cup\{0\}$ and $t>0$. Crucially, Bernstein's theorem~\cite[Theorem 1.4]{schilling2010bernstein} asserts that this property is equivalent to the existence of a unique non-negative measure~$\mu$ on~$[0,\infty)$ such that~$K(t)=\int_0^\infty \E^{-xt}\mu(\D x)$ for all~$t>0$. The BDG inequality derived in this paper and displayed in Theorem~\ref{thm:BDG_convolution} offers two key advantages:
\begin{enumerate}
    \item[1)] The assumptions are easy to check: $K$ must be completely monotone and belong to~$L^\gamma([0,T])$ for some~$\gamma>2$. This includes a variety of (rough) kernels commonly found in the literature, as illustrated in Example~\ref{ex:kernels}.
    \item[2)] The constant $\bm{C}=C_{p,\gamma,T} \norm{K}_{L^\gamma([0,T])}$ keeps track of the kernel norm.
\end{enumerate}
This second point especially stands in stark contrast with the aforementioned BDG inequalities as well as famous inequalities for the supremum of Gaussian processes such as Borell-TIS, Fernique's theorem or the results of \cite{graversen2000maximal} for Ornstein-Uhlenbeck processes. 
For comparison, usual pointwise estimates (where the supremum is taken outside of the expectation) require~$K\in L^2([0,T])$ and give a constant proportional to this norm, which is only marginally better than~$L^\gamma$. 

Moreover, this result paves the way for important applications pertaining to the comparison of SVEs with different kernels. Indeed, there is a significant interest in approximating SVEs with singular kernels by SVEs that use more regular kernels, which are often easier to study and to simulate as they may be semimartingales and/or Markovian~\cite{carmona1998fractional,abi2019multifactor,alfonsi2024approximation}. These developments are explored further in Section \ref{sec:SVE_comparison} where we present the first pathwise comparison, since only pointwise estimates were available until now. The case of the multifactor approximation is treated specifically in Proposition~\ref{prop:multifactor} and the rates of convergence we obtain are arbitrarily close to the ones derived in~\cite{alfonsi2024approximation} where the supremum is taken outside of the expectation. 

The range of $p$ for which the three BDG inequalities hold depend on the integrability of~$K$. To compare their requirements, consider the well-known power-law kernel~$K(t)=t^{H-\half}$ with~$H\in(0,\half)$. Decreusefond's inequality~\cite[Remark 4.1]{decreusefond2002regularity} and Theorem~\ref{thm:BDG_convolution} necessitate~$p>1/H$, while Zhang's result~\cite[Lemma 3.4]{zhang2010stochastic} only holds for~$p>2/H$. 

In addition, under stronger assumptions, we can take $T=+\infty$ in the BDG inequality~\eqref{eq:BDG_intro}, although this comes at the cost of losing the kernel norm dependence in the constant~$\bm{C}$. This uniform-in-time estimate is presented in Proposition~\ref{prop:BDG_convo_uniform} and is, to the best of our knowledge, the first instance of such an inequality for stochastic Volterra integrals.
The technique is valid provided the kernel decays sufficiently fast and we determine examples where this condition holds. SVEs with linear drift can be reformulated as a stochastic Volterra integral with a different kernel via a variation of the constant formula of Volterra type~\cite[Chapter 2, Theorem 3.5]{gripenberg1990volterra}. As an application, we demonstrate in Corollary~\ref{coro:uniform_time} how uniform-in-time estimates can be derived for such processes. 

To summarise, the contributions of this paper are two inequalities spurring one application each:
\begin{enumerate}
    \item[1)] Case $T<\infty$.
    \begin{enumerate}
        \item[a)] A BDG inequality for completely monotone kernels where~$\mathbf{C}$ is proportional to~$\norm{K}_{L^\gamma([0,T])}$ --- Theorem~\ref{thm:BDG_convolution};
        \item[b)] Pathwise comparisons of SVEs with different kernels --- Corollary~\ref{coro:Comparing_kernels}, and Proposition~\ref{prop:multifactor} for the multifactor approximation.
    \end{enumerate}
    \item[2)] Case $T=\infty$.
    \begin{enumerate}
        \item[a)] A BDG inequality for completely monotone kernels --- Proposition~\ref{prop:BDG_convo_uniform};
        \item[b)] Uniform-in-time estimates for linear SVEs --- Proposition~\ref{coro:uniform_time}.
    \end{enumerate}
\end{enumerate}

The rest of the paper is organised as follows. Section \ref{sec:main_results} presents the main results: the BDG inequalities for $T$ finite and infinite. The proofs of these results are gathered in Section~\ref{sec:proofs} and the applications to SVEs are developped in Section~\ref{sec:applications}. Finally, Section~\ref{sec:nonconv} sheds some light on the pre-existing BDG inequalities for non-convolution kernels. 
\bigskip
%%%%%%%%%%%%%%%%%%%%%%%%%%%%%%%%%%%%%%%%%%%%%%%%%%%%%%%%%%%%%

\textbf{Notations.} We fix $d,m\in\NN$ and a filtered probability space $(\Omega,\Ff,(\Ff_t)_{t\ge0},\PP)$ satisfying the usual conditions and equipped with an $m$-dimensional Brownian motion~$W$.  The notation~$\abs{\cdot}$ refers to both Euclidean norm in~$\RR^d$ and Frobenius norm in~$\RR^{d\times d}$ or $\RR^{d\times m}$, where $d\in\NN$. For any $\gamma\ge 1$, $T>0$, we denote $\Ll_T^\gamma:=L^\gamma([0,T],\RR^{d\times d})$. For all $p\ge2$, let $\mathbf{b}_p:=2p^{p/2}$ be the BDG constant.
%%%%%%%%%%%%%%%%%%%%%%%%%%%%%%%%%%%%%%%%%%%%%%%%%%%%%%%%%%%%%%%%%%%%%%%

\section{Convolution BDG for completely monotone kernels}\label{sec:main_results}
In this section we study the BDG inequality under the assumption that the kernel is of convolution form. An important subclass thereof is the family of completely monotone kernels. 
\begin{definition}
    We call a kernel~$K:(0,\infty)\to\RR^{d\times d}$ \emph{completely monotone} if there exists a non-negative measure~$\mu$ on~$[0,\infty)$ such that
\begin{align*}
    K(t)=\int_0^\infty \E^{-xt} \mu(\D x)\qquad \text{for all  } t>0.
\end{align*}
\end{definition} 
%%%%%%%%%%%%%%%%%%%%%%%%%%%%%%%%%%%%%%%%%%%%%%%%%%%
%%%%%%%%%%%%%%%%%%%%%%%%%%%%%%%%%%%%%%%%%%%%%%%ù
\begin{theorem}\label{thm:BDG_convolution}
    For some $T>0$ and $\gamma>2$, let $K\in \Ll^\gamma_T$ be a completely monotone kernel.
    Let~$p>\frac{2\gamma}{\gamma-2}$  and consider a previsible process~$\phi$ with values in~$\RR^{d\times m}$ such that $\int_0^T \EE\abs{\phi(s)}^p\ds < \infty$. 
    Then we have
    \begin{align}\label{eq:BDG_convolution}
        \EE\left[\sup_{t\in[0,T]} \abs{\int_0^t K(t-s)\phi(s)\D W_s}^p\right] \le \overline{C}_{p,\gamma,T,d,m} \norm{K}_{\Ll^\gamma_T}^p \, \int_0^T \EE\abs{\phi(s)}^p\ds,
    \end{align}
    where~$\overline{C}_{p,\gamma,T,d,m}=\overline{C}_{p,\gamma}d^{\frac{3p-4}{2}}m^{p-1} T^{p(\half-\frac{1}{\gamma})-1} $ and $\overline{C}_{p,\gamma}>0$ is a constant that depends only on~$p$ and $\gamma$.
\end{theorem}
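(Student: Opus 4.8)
The plan is to marry Bernstein's representation with the factorisation method, the decisive point being to keep the measure~$\mu$ \emph{bundled inside~$K$} at every stage: a coordinatewise (i.e.\ Minkowski) treatment of the $x$-variable diverges for singular kernels such as $K(t)=t^{H-1/2}$, so the $L^\gamma$-norm can only emerge from time integrals of $K$ itself. First I would write $K(t)=\int_0^\infty \E^{-xt}\mu(\D x)$ and, by a stochastic Fubini theorem, represent the integral as $Z_t:=\int_0^t K(t-s)\phi(s)\D W_s=\int_0^\infty Y_t^x\,\mu(\D x)$, where $Y_t^x:=\int_0^t \E^{-x(t-s)}\phi(s)\D W_s$ is an Ornstein--Uhlenbeck-type process. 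For each fixed~$x$ the family $\E^{-x\cdot}$ is a contraction semigroup, so the factorisation identity applies: for $\alpha\in(0,1)$ and $c_\alpha:=\sin(\pi\alpha)/\pi$, one has $Y_t^x=c_\alpha\int_0^t (t-r)^{\alpha-1}\E^{-x(t-r)}U_r^x\,\dr$ with $U_r^x:=\int_0^r (r-s)^{-\alpha}\E^{-x(r-s)}\phi(s)\D W_s$, as checked by Fubini and the Beta integral $\int_s^t (t-r)^{\alpha-1}(r-s)^{-\alpha}\dr=\pi/\sin(\pi\alpha)$.

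Integrating against~$\mu$ and applying Cauchy--Schwarz in~$x$ against the measure $\E^{-x(t-r)}\mu(\D x)$ extracts a deterministic factor while leaving a \emph{$t$-independent} random one: $|\int_0^\infty \E^{-x(t-r)}U_r^x\mu(\D x)|\le K(t-r)^{1/2}H_r^{1/2}$, with $H_r:=\int_0^\infty |U_r^x|^2\mu(\D x)$, where the residual $\E^{-x(t-r)}\le 1$ is simply dropped. A single use of Hölder in~$r$, with conjugate exponents $q:=p/(p-1)$ and~$p$, applied to $\sup_t|Z_t|\le c_\alpha\sup_t\int_0^t (t-r)^{\alpha-1}K(t-r)^{1/2}H_r^{1/2}\dr$, then bounds the supremum by the product of the deterministic integral $(\int_0^T u^{(\alpha-1)q}K(u)^{q/2}\du)^{1/q}$ and $(\int_0^T H_r^{p/2}\dr)^{1/p}$. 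Both of these bounds are free of~$t$, so the supremum is obtained at no cost; this is precisely the step that sidesteps the Kolmogorov/continuity detour and yields the sharper range of~$p$.

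It remains to estimate the two factors. For the deterministic one I would apply Hölder once more, placing $K^{q/2}$ in $L^{2\gamma/q}([0,T])$ so that $\norm{K}_{\Ll^\gamma_T}^{q/2}$ appears; finiteness of the complementary power integral forces $\alpha>\frac1p+\frac1{2\gamma}$. For the random one I would read $U_r=(U_r^x)_{x\ge0}$ as an $L^2(\mu)$-valued stochastic integral and invoke the Hilbert-space BDG inequality: the Hilbert--Schmidt norm of its integrand equals $(r-s)^{-2\alpha}K(2(r-s))|\phi(s)|^2$, so~$\mu$ is reconstituted into~$K$ and $\EE H_r^{p/2}\le \mathbf{b}_p\,\EE(\int_0^r (r-s)^{-2\alpha}K(2(r-s))|\phi(s)|^2\ds)^{p/2}$. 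Jensen followed by Hölder with exponent~$\gamma$ produces a second factor $\norm{K}_{\Ll^\gamma_T}^{p/2}$ together with $\int_0^T\EE|\phi(s)|^p\ds$ (the dilation is absorbed via $\norm{K}_{\Ll^\gamma_{2T}}\le 2^{1/\gamma}\norm{K}_{\Ll^\gamma_T}$, valid since~$K$ is non-increasing), and requires $\alpha<\frac12-\frac1{2\gamma}$. These two constraints on~$\alpha$ are simultaneously satisfiable exactly when $\frac1p<\frac12-\frac1\gamma$, that is $p>\frac{2\gamma}{\gamma-2}$; collecting powers gives $\norm{K}_{\Ll^\gamma_T}^{p/2}\cdot\norm{K}_{\Ll^\gamma_T}^{p/2}=\norm{K}_{\Ll^\gamma_T}^{p}$, and after the free parameter~$\alpha$ cancels one is left with the announced $T$-exponent $p(\tfrac12-\tfrac1\gamma)-1$.

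The hard part is exactly the interplay in the Cauchy--Schwarz and Hilbert-space BDG steps: one must refuse to integrate the randomness against~$\mu$ coordinatewise and instead keep~$\mu$ inside~$K$ throughout, which is what makes the $L^\gamma$-norm (rather than a $\mu$-dependent quantity) the correct object and simultaneously pins the threshold on~$p$. The remaining work is routine: justifying the two stochastic Fubini theorems and the well-definedness of the $L^2(\mu)$-valued integral (both following from $\int_0^T\EE|\phi(s)|^p\ds<\infty$ and $K\in\Ll^\gamma_T$), and tracking the dimensional constants $d^{(3p-4)/2}m^{p-1}$ through the vector-valued BDG estimates.
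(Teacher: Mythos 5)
Your proposal is correct and follows the same overall strategy as the paper: Bernstein's representation, stochastic Fubini, the factorisation identity built on the Beta integral $\int_s^t(t-r)^{\alpha-1}(r-s)^{-\alpha}\dr=\pi/\sin(\pi\alpha)$, a H\"older step in time that makes both resulting factors $t$-free (so the supremum costs nothing), a BDG-without-supremum estimate on the inner integral, and finally a choice of $\alpha$ in a nonempty interval equivalent to $p>\frac{2\gamma}{\gamma-2}$. Where you genuinely diverge is in how the Bernstein measure $\mu$ is processed. The paper applies H\"older jointly in $(x,u)$ against $\mu(\D x)\du$ with exponents $\bigl(\tfrac{p}{p-1},p\bigr)$, then scalar BDG pointwise in $x$, Young's convolution inequality, and only afterwards Minkowski's integral inequality plus H\"older to reconstitute $\mu$ into $K(s)^{2/p}s^{-2\alpha}$ and extract $\norm{K}_{\Ll^\gamma_T}$. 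You instead apply Cauchy--Schwarz in $x$ against the weight $\E^{-x(t-r)}\mu(\D x)$, which produces $K(t-r)^{1/2}$ immediately and isolates the $t$-independent random quantity $H_r=\norm{U_r}_{L^2(\mu)}^2$, and you then invoke the Hilbert-space BDG inequality in $L^2(\mu)$, whose Hilbert--Schmidt norm reconstitutes $\mu$ into $K(2(r-s))$ directly. Your route avoids Minkowski's integral inequality and keeps the two appearances of $\norm{K}_{\Ll^\gamma_T}^{p/2}$ symmetric, at the price of requiring a vector-valued BDG inequality (so the constant is no longer literally $\mathbf{b}_p$, which is harmless here) and a justification that the $L^2(\mu)$-valued integrand is admissible; the paper's route stays entirely scalar. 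The admissible windows for $\alpha$ differ --- yours is $\bigl(\tfrac1p+\tfrac1{2\gamma},\tfrac12-\tfrac1{2\gamma}\bigr)$ versus the paper's $\bigl(\tfrac{p+\gamma-1}{p\gamma},\tfrac{p\gamma-2}{2p\gamma}\bigr)$ --- but both are nonempty exactly when $p>\frac{2\gamma}{\gamma-2}$, and your bookkeeping of the powers of $\norm{K}_{\Ll^\gamma_T}$ and of $T$ (including the dilation $\norm{K}_{\Ll^\gamma_{2T}}\le 2^{1/\gamma}\norm{K}_{\Ll^\gamma_T}$, valid since complete monotonicity gives monotone decay) checks out and lands on the stated exponent $p(\tfrac12-\tfrac1\gamma)-1$.
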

\begin{remark}
Note that there is a trade-off in choosing smaller~$\gamma$ giving a smaller~$\norm{K}_{\Ll^\gamma_T}$ but restricting to higher~$p$.
\end{remark}

%%%%%%%%%%%%%%%%%%%%%%%%%%%%%%%%%%%%%%%%%%%%%%%%

\begin{example}\label{ex:kernels}
We present a few examples of kernels that can be found in the literature and satisfy the assumptions of Theorem~\ref{thm:BDG_convolution}.
\begin{enumerate}
    \item The exponential kernel $K(t)=\E^{-\beta t}$ is completely monotone for all~$\beta\ge0$ with $\mu(\D x)=\delta_\beta(\D x)$ where~$\delta_\beta$ is the Dirac measure at~$\beta$. Moreover, $K\in\Ll^\gamma_T$ for any~$\gamma>2$ hence $p>2$.  When~$\beta=0$ we have~$K\equiv1$ and we recover the classical BDG inequality with the same scaling in time~$T^{p(\half-\frac{1}{\gamma})-1}\norm{K}^p_{\Ll^\gamma_T}=T^{p/2-1}$.
    \item The celebrated power-law kernel~$K(t)=t^{H-\half}$ is completely monotone  for all~$H\in(-\infty,\half)$ with~$\mu(\D x)=\frac{x^{-H-\half}}{\Gamma(\half-H)}\D x$. Moreover, if~$H>0$ then $K\in\Ll^\gamma_T$ for any~$\gamma<\frac{2}{1-2H}$. This entails that~$p$ must be strictly greater than~$1/H$.
    \item The kernel $R_\lambda(t)
        =\lambda t^{H-\half} E_{H+\half,H+\half}(-\lambda t^{H+\half})$, 
    where $E_{\alpha,\beta}(z)=\sum_{n\ge0} \frac{z^n}{\Gamma(\alpha n+\beta)}$ is the Mittag-Leffler function, is completely monotone (see \cite[Appendix A.4]{gorenflo1997fractional} or \cite{tomovski2014laplace} for a full proof) with measure
    \begin{align*}
        \mu_\lambda(\D x)= \frac{1}{\pi}\frac{x^{H+\half}\sin((\pi(H+\half))}{x^{2H+1}+2\lambda x^{H+\half} \cos(\pi(H+\half))+\lambda^2} \dx.
    \end{align*}
    Once again, the BDG inequality holds for all $p>1/H$.
    This kernel naturally arises in various contexts related to Volterra equations, as we exhibit in Section~\ref{sec:SVE_linear}. 
\end{enumerate}
Of particular interest are the following additional examples, where~$K$ is completely monotone with measure~$\mu$:
\begin{enumerate}
    \item[(4)] The exponentially damped kernel $K_{\textrm{exp}}(t):=\E^{-\beta t}K(t)$ is completely monotone for all~$\beta\ge0$ with $\mu_{\textrm{exp}}(\D x)= \one_{x>\beta}\mu(\dx-\beta)$. If moreover $K\in\Ll^\gamma_T$ then so does~$K_{\textrm{exp}}$.
    \item[(5)] The shifted kernel~$K_{\textrm{shift}}(t):=K(t+\ep)$ is bounded and completely monotone for all~$\ep>0$ with measure~$\mu_{\textrm{shift}}(\D x)=\E^{-x\ep}\mu(\D x)$.
\end{enumerate}
Furthermore, the set of completely monotone functions is a convex cone which is closed under multiplication~\cite[Corollary 1.6]{schilling2010bernstein} and pointwise convergence (if~$\lim_{n\to\infty}K_n(t)=K(t)$ for all~$t>0$ and $(K_n)_{n\in\NN}$ is a sequence of completely monotone functions then so is~$K$) \cite[Corollary 1.7]{schilling2010bernstein}. 
It is clear that multidimensional versions of these examples also satisfy the necessary conditions. On the other hand, the regular versions with $H\ge\half$ are not covered; this is not really an issue since the associated stochastic Volterra integral is a semimartingale in that case and the standard BDG inequality applies to the local martingale term.
\end{example}
Under certain assumptions, we can take the limit as $T$ goes to $+\infty$ and obtain a uniform-in-time estimate for the moments of the stochastic Volterra integral. Essentially the kernel needs to decay fast enough to control the growth of the integral as~$T$ increases.
\begin{proposition}\label{prop:BDG_convo_uniform}
 Consider $p\ge2$, a previsible process $\phi$ with values in $\RR^{d\times m}$ such that~$\int_0^\infty \EE\abs{\phi(s)}^p\ds<\infty$
and a completely monotone kernel $K:\RR_+\to\RR^{d\times d}$ with measure $\mu$ such that
\begin{align}\label{eq:condition_infty}
    M_p := \bigg\lvert\int_0^\infty x^{\frac{2-p}{2p}}\mu(\D x)\bigg\lvert <\infty.
\end{align}
\textbf{1)} The following inequality holds
    \begin{align}\label{eq:BDG_convo_uniform}
        \EE\left[\sup_{t\in[0,+\infty)}\abs{\int_0^t K(t-s)\phi(s)\D W_s}^p\right] 
        \le C_{p,d,m} M_p^p \int_0^\infty \EE\abs{\phi(s)}^p\ds ,
    \end{align}
    where $C_{p,d,m}= d^{\frac{3p-4}{2}}m^{p-1} \mathbf{b}_p \Gamma\left(\frac{p-2}{2p^2}\right)^{p/2} \Gamma\left(\frac{p-2}{2p}\right)^{p-1} $.\\
\textbf{2)} Furthermore, for $d=1$ the condition \eqref{eq:condition_infty} is satisfied for the following kernels and values of~$p$:
\begin{enumerate}
    \item[a)] $K_{\beta,H}(t)=\E^{-\beta t}t^{H-\half}$ for all~$t>0$, where~$\beta>0$ and~$H\in(0,\half]$, and with~$p>1/H$. 
    \item[b)]  $R_\lambda(t)
        =\lambda t^{H-\half} E_{H+\half,H+\half}(-\lambda t^{H+\half})$, 
    where $H\in(0,\half)$, $\lambda>0$ and $E_{\alpha,\beta}(z)=\sum_{n\ge0} \frac{z^n}{\Gamma(\alpha n+\beta)}$ is the Mittag-Leffler function, and with $p>1/H$.
\end{enumerate}
\end{proposition}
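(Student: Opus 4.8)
The plan is to reduce the infinite-horizon Volterra integral to a $\mu$-superposition of scalar Ornstein--Uhlenbeck integrals and to bound each of them uniformly in time with the correct power of $x$. Writing the completely monotone kernel through its Bernstein measure, $K(t)=\int_0^\infty \E^{-xt}\mu(\D x)$, and invoking a stochastic Fubini theorem, I would record
\[
\int_0^t K(t-s)\phi(s)\D W_s = \int_0^\infty Y^x_t\,\mu(\D x), \qquad Y^x_t:=\int_0^t \E^{-x(t-s)}\phi(s)\D W_s,
\]
so that $\sup_{t\ge0}\abs{\int_0^t K(t-s)\phi(s)\D W_s}\le \int_0^\infty \sup_{t\ge0}\abs{Y^x_t}\,\mu(\D x)$. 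Taking $L^p(\Omega)$-norms and applying Minkowski's integral inequality to push the norm inside the $\mu$-integral reduces the whole statement to a time-uniform estimate on $\EE[\sup_{t\ge0}\abs{Y^x_t}^p]$ for each fixed $x>0$, with precisely the right power of $x$ so that $\int_0^\infty(\cdots)\,\mu(\D x)$ reproduces $M_p$ from \eqref{eq:condition_infty}.

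The crux is to show that $\big(\EE[\sup_{t\ge0}\abs{Y^x_t}^p]\big)^{1/p}\le c(p)\,x^{\frac1p-\frac12}\big(\int_0^\infty \EE\abs{\phi(s)}^p\ds\big)^{1/p}$, and here I would use the stochastic factorisation method. For $\alpha\in(0,1)$ the Beta identity $\int_s^t (t-r)^{\alpha-1}(r-s)^{-\alpha}\dr=\frac{\pi}{\sin(\pi\alpha)}$ together with $\E^{-x(t-s)}=\E^{-x(t-r)}\E^{-x(r-s)}$ and stochastic Fubini gives
\[
Y^x_t=\frac{\sin(\pi\alpha)}{\pi}\int_0^t (t-r)^{\alpha-1}\E^{-x(t-r)}\,U^x_r\,\dr, \qquad U^x_r:=\int_0^r (r-s)^{-\alpha}\E^{-x(r-s)}\phi(s)\D W_s.
\]
Taking the supremum in $t$ and applying H\"older with conjugate exponents $(p,q)$ turns the outer integral into the time-uniform factor $\big(\int_0^\infty u^{(\alpha-1)q}\E^{-xqu}\du\big)^{1/q}$, finite iff $\alpha>1/p$ and scaling like $x^{\frac1p-\alpha}$, while leaving $\big(\int_0^\infty\abs{U^x_r}^p\dr\big)^{1/p}$ with the supremum removed. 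Since $U^x$ is a genuine martingale in $r$, the classical BDG inequality (constant $\bm{b}_p$), Minkowski's integral inequality and Young's convolution inequality bound $\int_0^\infty\EE\abs{U^x_r}^p\dr$ by $\bm{b}_p\,\norm{u^{-2\alpha}\E^{-2xu}}_{L^1}^{p/2}\int_0^\infty\EE\abs{\phi(s)}^p\ds$, the $L^1$-norm being finite iff $\alpha<1/2$ and scaling like $x^{\alpha-\frac12}$. The two powers combine to $x^{\frac1p-\frac12}$; a definite choice of $\alpha\in(1/p,1/2)$ — which forces $p>2$, the boundary case $p=2$ being handled directly by It\^o's isometry — produces the two $\Gamma$-factors, and the dimensional constants $d^{\frac{3p-4}{2}}m^{p-1}$ arise from passing between Frobenius and Euclidean norms and summing over the $m$ Brownian components.

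For the second part, since $\frac{2-p}{2p}=\frac1p-\frac12\in(-\frac12,0]$, verifying $M_p<\infty$ is purely a matter of integrability of $x^{\frac1p-\frac12}$ against $\mu$ at $0$ and at $\infty$. For $K_{\beta,H}$ I would use Examples (2) and (4): its Bernstein measure is $\one_{x>\beta}\frac{(x-\beta)^{-H-\half}}{\Gamma(\half-H)}\dx$ (with the degenerate case $H=\half$ giving $\mu=\delta_\beta$ and $M_p=\beta^{\frac1p-\frac12}$ directly), so $M_p$ is a multiple of $\int_\beta^\infty x^{\frac1p-\frac12}(x-\beta)^{-H-\half}\dx$, convergent at $x=\beta$ because $H<\half$ and at $\infty$ precisely when $\frac1p-1-H<-1$, i.e.\ $p>1/H$; a substitution evaluates it as a Beta function. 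For $R_\lambda$ I would use the explicit density from Example (3), which is $\asymp x^{H+\half}$ as $x\to0$, $\asymp x^{-H-\half}$ as $x\to\infty$, and bounded in between (its denominator has negative discriminant, hence no real zeros), so $x^{\frac1p-\frac12}\mu_\lambda(\dx)$ is integrable at $0$ for every $p$ and at $\infty$ exactly when $p>1/H$.

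The main obstacle is the single-$x$ uniform-in-time estimate: one must convert the pathwise supremum into an $L^p$-in-time norm of a true martingale through the factorisation, and then arrange the twin constraints $1/p<\alpha<1/2$ so that the H\"older and Young powers of $x$ coalesce to the exponent $\frac1p-\frac12$ appearing in $M_p$. Everything else — the stochastic Fubini step, the two uses of Minkowski's integral inequality, Young's inequality, and the dimensional bookkeeping — is routine once this scaling is secured.
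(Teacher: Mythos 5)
Your proof is correct and rests on the same stochastic factorisation device as the paper's (Lemma~\ref{lemma:BDG_conv} followed by the explicit Gamma-integral computations): the Beta-identity insertion, H\"older, the supremum-free BDG bound, Young's convolution inequality, and the verification of condition~\eqref{eq:condition_infty} for $K_{\beta,H}$ and $R_\lambda$ all appear in exactly the same roles. The only organisational difference is that you pull the Bernstein measure out first via Minkowski and establish the single-exponential estimate $\EE[\sup_{t\ge0}\abs{Y^x_t}^p]^{1/p}\lesssim x^{\frac{2-p}{2p}}$ for each fixed $x$ --- so the two powers of $x$ combine for \emph{every} $\alpha\in(1/p,1/2)$ --- whereas the paper applies H\"older jointly over $\mu(\D x)\,\du$ and must then choose the specific $\alpha=\frac{2-p+p^2}{2p^2}$ to equalise the two exponents at $\frac{2-p}{2p}$; both routes produce $M_p^p$ with the same Gamma factors (and the $p=2$ endpoint is vacuous in either case, since the stated constant blows up there).
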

%%%%%%%%%%%%%%%%%%%%%%%%%%%%%%%%%%%%%%%%%%%
\section{Proofs of the main results}\label{sec:proofs} 
Both of our main results rely on the following lemma, which provides an initial step in the computations.
\begin{lemma}\label{lemma:BDG_conv}
Let~$K\in \Ll^2_T$ be a completely monotone kernel.
    Let~$p\ge2$  and consider a previsible process~$\phi$ with values in~$\RR^{d\times m}$ such that $\int_0^T \EE\abs{\phi(s)}^p\ds < \infty$. 
    Then we have
    \begin{align}\label{eq:lemma_BDG_convolution}
        &\EE\left[\sup_{t\in[0,T]} \abs{\int_0^t K(t-s)\phi(s)\D W_s}^p\right] \\
        &\le d^{\frac{3p-4}{2}}m^{p-1}\mathbf{b}_p \bigg\lvert\int_0^T s^{(\alpha-1)\frac{p}{p-1}} \abs{K(s)}\ds\bigg\lvert^{p-1} \bigg\lvert\int_0^\infty \left(\int_0^T \E^{-2xs}s^{-2\alpha} \ds\right)^{p/2\!\!\!}\mu(\D x)\bigg\lvert \int_0^T \EE\abs{\phi(s)}^p\ds,\nonumber
    \end{align}
    for any $\alpha\in(0,1/2)$ such that the right-hand-side is finite.
\end{lemma}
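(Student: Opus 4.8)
The plan is to combine the Bernstein representation of $K$ with the factorisation method for stochastic convolutions (in the spirit of Da Prato and Zabczyk). Writing $K(t-s)=\int_0^\infty \E^{-x(t-s)}\mu(\D x)$ and applying a stochastic Fubini, I would first express the Volterra integral as a $\mu$-mixture of Ornstein--Uhlenbeck-type integrals,
\begin{align*}
\int_0^t K(t-s)\phi(s)\D W_s = \int_0^\infty Y_t^x\,\mu(\D x), \qquad Y_t^x:=\int_0^t \E^{-x(t-s)}\phi(s)\D W_s.
\end{align*}
The point of restricting to completely monotone kernels is precisely that each $Y^x$ is driven by the genuine semigroup $\E^{-x\,\cdot}$, to which the classical factorisation applies.

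Next, fixing $\alpha\in(0,1/2)$ and using the Beta-type identity $\int_s^t (t-r)^{\alpha-1}(r-s)^{-\alpha}\D r=\pi/\sin(\pi\alpha)$ together with the semigroup property $\E^{-x(t-r)}\E^{-x(r-s)}=\E^{-x(t-s)}$, a second stochastic Fubini would give the factorisation
\begin{align*}
\int_0^t K(t-s)\phi(s)\D W_s = \frac{\sin(\pi\alpha)}{\pi}\int_0^\infty\int_0^t (t-r)^{\alpha-1}\E^{-x(t-r)} Z_r^x\,\D r\,\mu(\D x),
\end{align*}
where $Z_r^x:=\int_0^r (r-s)^{-\alpha}\E^{-x(r-s)}\phi(s)\D W_s$. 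The role of $\alpha<1/2$ is to keep both the Beta integral (which needs $\alpha<1$) and the singular weight $(r-s)^{-2\alpha}$ arising below in the quadratic variation of $Z^x$ (which needs $2\alpha<1$) integrable.

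The decisive step is then a single Hölder inequality in the joint variable $(r,x)$ against the measure $\D r\,\mu(\D x)$, with conjugate exponents $q=\frac{p}{p-1}$ and $p$, splitting the weight as $\E^{-x(t-r)}=\E^{-x(t-r)/q}\,\E^{-x(t-r)/p}$. The $L^{q}$-factor collapses $\E^{-x(t-r)}\mu(\D x)$ back to $K(t-r)$ and produces exactly $\big(\int_0^T s^{(\alpha-1)\frac{p}{p-1}}\abs{K(s)}\D s\big)^{p-1}$; this is the mechanism that keeps the kernel at the first power with the exponent $\frac{p}{p-1}$ landing only on $s^{\alpha-1}$. The leftover $L^p$-factor is $\int_0^\infty\int_0^t \E^{-x(t-r)}\abs{Z_r^x}^p\D r\,\mu(\D x)$, which I would bound using $\E^{-x(t-r)}\le 1$ and extending the time integral to $[0,T]$; this removes the $t$-dependence, so the supremum over $t$ is dispatched before passing to the expectation.

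Finally, I would estimate $\EE\abs{Z_r^x}^p$ by the classical (multidimensional) BDG inequality followed by Jensen's inequality with respect to the finite measure $(r-s)^{-2\alpha}\E^{-2x(r-s)}\D s$ (here $p\ge2$ is used so that $p/2\ge1$); after a Fubini in $(r,s)$ this yields the factor $\int_0^\infty\big(\int_0^T \E^{-2xs}s^{-2\alpha}\D s\big)^{p/2}\mu(\D x)$ together with $\mathbf{b}_p\int_0^T\EE\abs{\phi(s)}^p\D s$, while $(\sin(\pi\alpha)/\pi)^p\le1$ is simply discarded. The dimensional constant $d^{\frac{3p-4}{2}}m^{p-1}$ is routine bookkeeping coming from the Frobenius-norm bound $\abs{K\phi}\le\abs{K}\abs{\phi}$ and from applying the scalar BDG componentwise over the $d$ rows and $m$ columns. \textbf{The main obstacle} is the factorisation step: justifying the two stochastic Fubini interchanges under only $K\in\Ll^2_T$ and finiteness of the right-hand side, and arranging the Hölder split so that the exponential weight is distributed between the deterministic and stochastic factors in a way that simultaneously reconstructs $K$ and the correct quadratic-variation term.
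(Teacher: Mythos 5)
Your proposal is correct and follows essentially the same route as the paper's proof: the Bernstein representation combined with the factorisation identity $\int_s^t(t-u)^{\alpha-1}(u-s)^{-\alpha}\D u=\pi/\sin(\pi\alpha)$, a stochastic Fubini, a H\"older inequality in the joint variable $(u,x)$ against $\mu(\D x)\D u$ that reconstructs $K$ at the first power, and then the pointwise BDG inequality for the inner integrals. The only (immaterial) differences are that the paper puts the whole exponential weight into the $L^{p/(p-1)}$ factor and bounds $\E^{-x(t-u)\frac{p}{p-1}}\le\E^{-x(t-u)}$ rather than splitting it, and closes with Young's convolution inequality where you use Jensen plus Fubini, which is equivalent.
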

%%%%%%%%%%%%%%%%%%%%%%%%%%%%%%%%%%%%%%%%%%%
\begin{proof}

This proof is inspired from the the BDG inequality for stochastic integrals with a semigroup, that appear in mild solutions to SPDEs, see e.g. \cite[Lemma 3.3]{gawarecki2010stochastic}. 
We treat the case~$d=m=1$ and the multidimensional case follows by studying it componentwise since
\begin{align*}
    \bigg\vert\int_0^t K(t-s)\phi(s)\D W_s\bigg\vert^p
    &=\left(\sum_{i=1}^d \bigg\vert\sum_{k=1}^d\sum_{j=1}^m \int_0^t K^{ik}(t-s) \phi^{kj}(s) \D W^j_s\bigg\vert^2\right)^{p/2}\\
    &\le d^{\frac{3p-4}{2}} m^{p-1} \sum_{i,j,k} \abs{ \int_0^t K^{ik}(t-s) \phi^{kj}(s) \D W^j_s}^p.
\end{align*}
First recall that for any~$0\le s<t\le T$, and all~$\alpha\in(0,1)$,
\begin{align*}
    \int_s^t (t-u)^{\alpha-1} (u-s)^{-\alpha}\du =\Gamma(\alpha)\Gamma(1-\alpha)= \frac{\pi}{\sin(\pi \alpha)}=:C_\alpha^{-1}\ge 1.
\end{align*}
The stochastic Fubini theorem~\cite[Theorem 65]{protter2005stochastic} entails (as $\E^{-xt}\mu(\D x)$ is a finite measure for all $t>0$)
\begin{align*}
    \int_0^t K(t-s) \phi(s) \D W_s
    &=C_\alpha\int_0^t \left(\int_s^t (t-u)^{\alpha-1} (u-s)^{-\alpha}\du \right) \left(\int_0^\infty \E^{-x(t-s)} \mu(\D x)\right) \phi(s) \D W_s\\
    &= C_\alpha \int_0^t (t-u)^{\alpha-1}  \int_0^\infty \E^{-x(t-u)}\left(\int_0^u\E^{-x(u-s)} (u-s)^{-\alpha}\phi(s)\D W_s \right)\mu(\D x)\du.
\end{align*}
We introduce, for all~$u\in[0,T]$ and $x\ge0$, the random field
\begin{align*}
    Y(u,x):=\int_0^u\E^{-x(u-s)} (u-s)^{-\alpha}\phi(s)\D W_s .
\end{align*}
For any $p\ge2$, H\"older's inequality yields 

\begin{align}
    \abs{\int_0^t K(t-s)\phi(s)\D W_s}^{p}
    &=C_\alpha^{p} \abs{\int_0^t \int_0^\infty (t-u)^{\alpha-1}\E^{-x(t-u)} Y(u,x)\mu(\D x)\du}^{p} \nonumber\\
    &\le \left(\int_0^t\int_0^\infty (t-u)^{(\alpha-1)\frac{p}{p-1}}\E^{-x(t-u)\frac{p}{p-1}}\mu(\D x) \du \right)^{p-1} \int_0^t \int_0^\infty \abs{Y(u,x)}^p\mu(\D x) \du \nonumber \\
    &\le \abs{\int_0^T u^{(\alpha-1)\frac{p}{p-1}} K(u) \du }^{p-1} \int_0^t \int_0^\infty \abs{Y(u,x)}^p\mu(\D x) \du. \label{eq:ineq_mu_positive_1}
\end{align}
We apply BDG (without supremum, as in \eqref{eq:BDG_no_sup}) and Young's convolution inequalities to obtain
\begin{align*}
    \EE\left[\int_0^T \int_0^\infty \abs{Y(u,x)}^p\mu(\D x) \du\right] 
    &\le \mathbf{b}_p \EE\int_0^\infty \int_0^T  \left(\int_0^u\E^{-2x(u-s)}(u-s)^{-2\alpha} \phi(s)^2\ds\right)^{p/2} \du \,\mu(\D x)\\
    &\le  \mathbf{b}_p \int_0^\infty \left(\int_0^T \E^{-2xs} s^{-2\alpha} \ds\right)^{p/2} \!\!\mu(\D x)\, \EE\int_0^T \abs{\phi(s)}^p\ds.
\end{align*}
This yields the claim.
\end{proof}

\begin{proof}[Proof of Theorem \ref{thm:BDG_convolution}]
    The proof consists in giving more explicit bounds to the integrals appearing in~\eqref{eq:lemma_BDG_convolution}. Once again we only provide it for the one-dimensional case.  
     Set~$\alpha\in(\frac{p+\gamma-1}{p\gamma},\frac{p\gamma-2}{2p\gamma})$ which is a non-empty interval for all~$p>\frac{2\gamma}{\gamma-2}$, and where $\gamma>2$.
H\"older's inequality entails
\begin{align*}
     \left(\int_0^T s^{(\alpha-1)\frac{p}{p-1}} K(s) \ds \right)^{p-1}
     \le \left(\int_0^T s^{(\alpha-1)\frac{p}{p-1}\frac{\gamma}{\gamma-1}}\ds \right)^{\frac{(p-1)(\gamma-1)}{\gamma}} \norm{K}_{\Ll^\gamma_T}^{p-1},
\end{align*}
where the integral on the right-hand-side is finite thanks to the lower bound~$\alpha>\frac{p+\gamma-1}{p\gamma}$. 
By Minkowski's integral inequality and H\"older's inequality we get
\begin{align*}
    &\left(\int_0^\infty \left(\int_0^T \E^{-2xs} s^{-2\alpha} \ds\right)^{p/2} \mu(\D x) \right)^{2/p}
    \le \int_0^T\left(\int_0^\infty \E^{-pxs} s^{-p\alpha}\mu(\D x)\right)^{2/p}\ds \\
    &\le \int_0^T K(s)^{2/p}s^{-2\alpha}\ds
    \le \norm{K}_{\Ll^\gamma_T}^{2/p} \left(\int_0^T s^{-2\alpha\frac{p\gamma}{p\gamma-2}}\ds \right)^{1-\frac{2}{p\gamma}}.
\end{align*}
where this last integral is finite thanks to the upper bound~$\alpha<\frac{p\gamma-2}{2p\gamma}$.
Regarding the constants, we note that
\begin{align*}
    \left(\int_0^T u^{(\alpha-1)\frac{p}{p-1}\frac{\gamma}{\gamma-1}}\du \right)^{\frac{(p-1)(\gamma-1)}{\gamma}} \left(\int_0^T s^{-2\alpha\frac{p\gamma}{p\gamma-2}}\ds \right)^{(1-\frac{2}{p\gamma})\frac{p}{2}}
    =: C_{\alpha,p,\gamma} T^{p(\half-\frac{1}{\gamma})-1}.
\end{align*}
 We can choose any~$\alpha\in(\frac{p+\gamma-1}{p\gamma},\frac{p\gamma-2}{2p\gamma})$, hence we define
$\overline{C}_{p,\gamma}$ te be higher than $\mathbf{b}_p C_{\alpha,p,\gamma}$ for some~$\alpha$ in this interval. 
Gathering our estimates together yields
\begin{align*}
    \EE\left[\sup_{t\in[0,T]} \abs{\int_0^t K(t-s)\phi(s)\D W_s}^p\right] 
    \le \overline{C}_{p,\gamma} T^{p(\half-\frac{1}{\gamma})-1}  \norm{K}_{\Ll^\gamma_T}^{p} \int_0^T \EE\abs{\phi(s)}^p\ds,
\end{align*}
which concludes the proof.
\end{proof}
%%%%%%%%%%%%%%%%%%%%%%%%%%%%%%%%%%%%%%%%%%%%%%%%%%%%%%%%%%%%%%%%
\begin{proof}[Proof of Proposition \ref{prop:BDG_convo_uniform}]
    \textbf{(1)} We provide the proof in the one-dimensional case again. Taking limits as~$T\to\infty$ on both sides of \eqref{eq:lemma_BDG_convolution}  and applying the monotone convergence theorem   shows that
    \begin{align} \label{eq:proof_infinity}
    &\EE\left[\sup_{t\in[0,\infty)} \abs{\int_0^t K(t-s)\phi(s)\D W_s}^p\right] \nonumber\\
        &\le \mathbf{b}_p\left(\int_0^\infty s^{(\alpha-1)\frac{p}{p-1}} K(s)\ds\right)^{p-1} \int_0^\infty \left(\int_0^\infty \E^{-2xs}s^{-2\alpha} \ds\right)^{p/2}\!\!\!\mu(\D x) \int_0^\infty \EE\abs{\phi(s)}^p\ds,
    \end{align}
    for any $\alpha\in(0,1)$ such that the right-hand-side is finite. It is thus a matter of proving the latter. 
    Since $K(t)=\int_0^\infty \E^{-xt}\mu(\D x)$, and setting~$p>1/\alpha$ and $\alpha\in(0,\half)$, we are led to study the following integrals 
    \begin{align*}
        \int_0^\infty s^{(\alpha-1)\frac{p}{p-1}} K(s)\ds 
        = \int_0^\infty \int_0^\infty s^{(\alpha-1)\frac{p}{p-1}} \E^{-xs} \ds \mu(\D x)
        = \Gamma\left(\frac{\alpha p-1}{p-1}\right) \int_0^\infty x^{\frac{1-\alpha p}{p-1}} \mu(\D x),
    \end{align*}
    where we applied Fubini-Tonelli's theorem. For the second integral, notice that for any $\alpha<1/2$ we have~$\int_0^\infty \E^{-2xs}s^{-2\alpha} \ds
        = (2x)^{2\alpha-1} \Gamma(1-2\alpha)$, and hence 
    \begin{align}\label{eq:bound_int_infty}
        \int_0^\infty \left(\int_0^\infty \E^{-2xs}s^{-2\alpha} \ds\right)^{p/2}\!\!\!\mu(\D x) 
        \le \Gamma(1-2\alpha)^{p/2} \int_0^\infty x^{p(\alpha-\half)}\mu(\D x).
    \end{align}
    Setting $\alpha=\frac{2-p+p^2}{2p^2}$ we get $p>1/\alpha$; $\frac{1-\alpha p}{p-1}=p(\alpha-\half)=\frac{2-p}{2p}$
    and $1-2\alpha=\frac{p-2}{2p^2}$.
    We are left to check that $\int_0^\infty x^\frac{2-p}{2p}\mu(\D x)$ is finite for the kernels introduced in the Proposition. 

    \textbf{(2a)}
    The kernel defined by~$K_{\beta,H}(t)=t^{H-\half}\E^{-\beta t}$ for all~$t>0$ with~$\beta>0,H\in(0,\half]$ is completely monotone with measure
    \begin{align*}
        \mu_{\beta,H}(\D x)= \delta_{\beta}(\D x) \one_{H=\half} + \Gamma(1/2-H)^{-1} (x-\beta)^{-H-\half} \one_{x>\beta}\one_{H<\half}.
    \end{align*}
    In the case~$H=\half$ we have $\int_0^\infty x^\frac{2-p}{2p}\delta_{\beta}(\D x)=\beta^{\frac{2-p}{2p}}<\infty$. On the other hand, for~$H\in(0,\half)$ and $p>1/H$, the quantity of interest reads~\cite[3.191-2]{gradshteyn2014table}
    \begin{align*}
        \int_{\beta}^\infty x^{\frac{2-p}{2p}} (x-\beta)^{-H-\half} \dx  
        =\beta_i^{1/p-H}\mathrm{B}\left(H-1/p,1/2-H\right)<\infty,
    \end{align*}
    where $\mathrm{B}(a,b)=\int_0^1 x^{a-1} (1-x)^{b-1}\dx$ denotes the Beta function. 
    
    \textbf{(2b)} The kernel $R_\lambda(t)
        =\lambda t^{H-\half} E_{H+\half,H+\half}(-\lambda t^{H+\half})$, is completely monotone (see \cite[Appendix A.4]{gorenflo1997fractional} or \cite{tomovski2014laplace} for a full proof) with measure
    \begin{align*}
        \mu_\lambda(\D x)= \frac{1}{\pi}\frac{x^{H+\half}\sin((\pi(H+\half))}{x^{2H+1}+2\lambda x^{H+\half} \cos(\pi(H+\half))+\lambda^2} \dx
        \le \frac{x^{H+\half}}{\lambda(\sin(\pi(H+\half))-\cos(\pi(H+\half)))} \dx.
    \end{align*}
    Since the polynomial in the denominator is greater than the positive constant~$C_{\lambda,H}:=\lambda(\sin(\pi(H+\half))-\cos(\pi(H+\half)))$, we have for any~$p>2$ and $N>0$ that
    \begin{align*}
        \int_0^N x^{\frac{2-p}{2p}}\mu_\lambda(\D x)\le C_{\lambda,H}^{-1} \int_0^N x^{\frac{2-p}{2p}+H+\half}\dx
        = C_{\lambda,H}^{-1} \int_0^N x^{1/p+H}\dx<\infty.
    \end{align*}
    On the other hand, for $N$ large enough and $x\ge N$, it holds $\mu_\lambda(\D x)\le 2x^{-H-\half}\dx$ and thus
    \begin{align*}
        \int_N^\infty x^{\frac{2-p}{2p}}\mu_\lambda(\D x)\le 2 \int_N^\infty x^{\frac{2-p}{2p}-H-\half}\dx 
        =2\int_N^\infty x^{1/p-H-1}\dx, 
    \end{align*}
    which is finite for any~$p>1/H$.
\end{proof}
%%%%%%%%%%
%%%%%%%%%%%%%%%%%%%%%%%%%%%%%%%%%%%%%%%%%%%%%%%%%%%%%%%%%%%%%%%%%%%
\section{Applications to stochastic Volterra equations}\label{sec:applications}
As we hinted at in the introduction, the main interest in studying stochastic Volterra integrals lies in its interplay with SVEs. We explore in this section the applications of the BDG inequalities~\ref{eq:BDG_convolution} and \ref{eq:BDG_convo_uniform} to this class of equations.

\subsection{Comparing SVEs with different kernels}\label{sec:SVE_comparison}
Let $d=m=1$ in this section. Consider a Lipschitz continuous function $\sigma:\RR\to\RR$ with linear growth and two kernels~$K_1,K_2\in \Ll^\gamma_T$  for some~$\gamma>2$ such that~$K_2$ and~$K_1-K_2$ (or $K_2-K_1$) are completely monotone. For a continous function~$x_0:[0,T]\to\RR$, introduce  the stochastic Volterra equations
\begin{align}\label{eq:SVE_comp}
    &X_t=x_0(t) + \int_0^t K_1(t-s)\sigma(X_s)\D W_s,\\
    &Y_t = x_0(t) + \int_0^t K_2(t-s) \sigma(Y_s)\D W_s, \quad t\in[0,T].\nonumber
\end{align}
Standard results, see e.g. \cite[Theorem 3.1]{zhang2010stochastic}, show that they both have a unique solution and~$\sup_{t\in[0,T]}\EE[\abs{X_t}^p+\abs{Y_t}^p] <\infty$.
\begin{corollary}
    \label{coro:Comparing_kernels}
    For all~$p>0$, there is a constant~$C>0$ depending on~$p,\gamma,T$ such that
    \begin{align}
        \EE\left[\sup_{t\in[0,T]} \abs{X_t-Y_t}^p\right]\le C \norm{K_1-K_2}_{\Ll^\gamma_T}^p.
    \end{align}
\end{corollary}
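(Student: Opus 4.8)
The plan is to control the difference process $\Delta_t := X_t - Y_t$ by a Grönwall argument, using Theorem~\ref{thm:BDG_convolution} to turn the pathwise supremum into time integrals of moments. The crucial preliminary step is the choice of decomposition: adding and subtracting $\int_0^r K_2(r-s)\sigma(X_s)\D W_s$ gives, for every $r\in[0,T]$,
\begin{align*}
    \Delta_r = \underbrace{\int_0^r K_2(r-s)\big[\sigma(X_s)-\sigma(Y_s)\big]\D W_s}_{=:\,I_1(r)} + \underbrace{\int_0^r \big[K_1(r-s)-K_2(r-s)\big]\sigma(X_s)\D W_s}_{=:\,I_2(r)}.
\end{align*}
This particular split factors out $K_2$ (rather than $K_1$) in the Lipschitz term, so that both stochastic integrals feature a completely monotone kernel: $K_2$ for $I_1$, and $K_1-K_2$ (or $K_2-K_1$, after pulling the minus sign through the integrand and using that $\abs{\cdot}^p$ is insensitive to it) for $I_2$. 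The point is that we never need $K_1$ itself to be completely monotone, which is what lets both hypotheses of the statement be handled at once.

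Next I would set $u(t):=\EE[\sup_{r\in[0,t]}\abs{\Delta_r}^p]$ and, after a $2^{p-1}$-convexity bound, apply Theorem~\ref{thm:BDG_convolution} on the interval $[0,t]$ to each of $\EE[\sup_{r\le t}\abs{I_1(r)}^p]$ and $\EE[\sup_{r\le t}\abs{I_2(r)}^p]$. For $I_1$, the Lipschitz property of $\sigma$ gives $\int_0^t \EE\abs{\sigma(X_s)-\sigma(Y_s)}^p\ds \le L^p\int_0^t \EE\abs{X_s-Y_s}^p\ds \le L^p\int_0^t u(s)\ds$. For $I_2$, the linear growth of $\sigma$ together with the pointwise bound $\sup_{s\le T}\EE\abs{X_s}^p<\infty$ (from the cited well-posedness result) gives $\int_0^t \EE\abs{\sigma(X_s)}^p\ds \le M<\infty$. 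Since the interval-dependent constant $\overline{C}_{p,\gamma,t,1,1}$ carries the factor $t^{p(\half-\frac1\gamma)-1}$, whose exponent is positive precisely because $p>\tfrac{2\gamma}{\gamma-2}$, and since $\norm{K_i}_{\Ll^\gamma_t}\le\norm{K_i}_{\Ll^\gamma_T}$, every constant may be frozen at its value at $T$. This yields an inequality of the form
\begin{align*}
    u(t)\le A + B\int_0^t u(s)\ds, \qquad A = C_1\,\norm{K_1-K_2}_{\Ll^\gamma_T}^p, \quad B = C_2\,\norm{K_2}_{\Ll^\gamma_T}^p L^p,
\end{align*}
with $A,B<\infty$ and, importantly, $u(t)<\infty$ for every $t\le T$ --- the latter following from the very same two applications of the theorem, using only the pointwise-in-time moment bounds of $X$ and $Y$. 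Grönwall's lemma then gives $u(T)\le A\,\E^{BT}$, which is the claim with $C=C_1\E^{BT}$.

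Finally, Theorem~\ref{thm:BDG_convolution} applies only for $p>\tfrac{2\gamma}{\gamma-2}$, whereas the corollary is stated for all $p>0$. I would dispatch the remaining range by Lyapunov's inequality: choosing any $q>\tfrac{2\gamma}{\gamma-2}$, the case above gives $\EE[\sup_{t\le T}\abs{\Delta_t}^q]\le C_q\norm{K_1-K_2}_{\Ll^\gamma_T}^q$, and for $0<p\le q$ the concavity of $x\mapsto x^{p/q}$ yields $\EE[\sup_{t\le T}\abs{\Delta_t}^p]\le (\EE[\sup_{t\le T}\abs{\Delta_t}^q])^{p/q}$, producing the bound with the $p$-th power of the norm and constant $C_q^{p/q}$. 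The main obstacle here is conceptual rather than computational: recognising that the Lipschitz term must be separated using $K_2$, so that every kernel fed into Theorem~\ref{thm:BDG_convolution} is completely monotone under either hypothesis. The only genuinely technical point thereafter is checking that the interval-dependent constant is nondecreasing in $t$ (guaranteed exactly by $p>\tfrac{2\gamma}{\gamma-2}$), which is what allows the Grönwall step to close with the constants evaluated at $T$.
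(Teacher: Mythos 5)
Your proposal is correct and follows essentially the same route as the paper's proof: the identical decomposition putting $K_1-K_2$ against $\sigma(X)$ and $K_2$ against the Lipschitz difference $\sigma(X)-\sigma(Y)$, two applications of Theorem~\ref{thm:BDG_convolution}, Gr\"onwall's lemma, and Jensen/Lyapunov to cover $0<p\le\frac{2\gamma}{\gamma-2}$. The only difference is that you spell out two points the paper leaves implicit (finiteness of $t\mapsto\EE[\sup_{r\le t}\abs{X_r-Y_r}^p]$ and monotonicity in $t$ of the BDG constant), which is a welcome addition rather than a deviation.
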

\begin{proof}
In this proof the constant~$C>0$ may change from line to line. 
For any~$p>\frac{2\gamma}{\gamma-2}$, the BDG inequality~\eqref{eq:BDG_convolution} yields 
\begin{align*}
    \EE\left[\sup_{t\in[0,T]} \abs{X_t - Y_t}^{p} \right] 
&\le \EE\left[\sup_{t\in[0,T]}\abs{\int_0^t \big( K_1(t-s) - K_2(t-s)\big) \sigma(X_s)\D W_s}^p\right]\\
&\qquad+ \EE\left[\sup_{t\in[0,T]}\abs{\int_0^t K_2(t-s) \big( \sigma(X_s)-\sigma(Y_s)\big) \D W_s}^p\right] \\
&\le C \norm{K_1-K_2}_{\Ll^\gamma_T}^p \int_0^T \EE\abs{\sigma(X_s)}^p \ds + C\norm{K_2}_{\Ll^\gamma_T}^p \int_0^T \EE\abs{\sigma(X_s)-\sigma(Y_s)}^p\ds \\
&\le C \norm{K_1-K_2}_{\Ll^\gamma_T}^p + C \int_0^T \EE\left[\sup_{t\in[0,s]}\abs{X_t-Y_t}^p\right]\ds,
\end{align*}
where we also used the Lipschitz continuity and linear growth of~$\sigma$.
Gr\"onwall's inequality thus shows that 
\begin{align*}
    \EE\left[\sup_{t\in[0,T]} \abs{X_t - Y_t}^{p} \right] 
&\le C \norm{K_1-K_2}_{\Ll^\gamma_T}^p.
\end{align*}
Moreover, for any $q\in(0,p)$ we have
\begin{align*}
    \EE\left[\sup_{t\in[0,T]} \abs{X_t - Y_t}^{q} \right] 
    \le 
    \EE\left[\sup_{t\in[0,T]} \abs{X_t - Y_t}^{p} \right]^{q/p}  
\le C^{q/p} \norm{K_1-K_2}_{\Ll^\gamma_T}^q.
\end{align*}
This concludes the proof.
\end{proof}
%%%%%%%%%%%%%%%%%%%%%%%%%%%%%%%%%%%%%

\subsubsection{Shifted kernel approximation}
Any completely monotone kernel $K$ with measure $\mu$ can be approximated by its shifted version~$K^\ep:t\to K(t+\ep)$, for any $\ep>0$. This may allow to avoid the singularity as $t\to0$ when considering singular kernels, which are prominent in the rough volatility literature. Moreover $K^\ep$ is completely monotone with measure $\mu^\ep(\D x)=\E^{-x\ep}\mu(\D x)$ since~$ K^\ep(t) = K(t+\ep)= \int_0^\infty \E^{-x(t+\ep)}\mu(\D x)$, see Example~\ref{ex:kernels}. 
Hence $K-K^\ep$ is completely monotone with the positive measure $\mu-\mu^\ep$ and Corollary \ref{coro:Comparing_kernels} then indicates that the distance between the SVEs associated to $K$ and $K^\ep$ is controlled by $\norm{K-K^\ep}_{\Ll^\gamma_T}^p$. 

Let us take as a working example the power-law kernel~$K(t)=t^{H-\half}$ and its shifted version $K^\ep(t)=(t+\ep)^{H-\half}$ for any $t\in[0,T]$. Then for any~$\gamma\in(2,\frac{2}{1-2H})$ convexity arguments yield
\begin{align*}
    \norm{K-K^\ep}_{\Ll^\gamma_T}^{\gamma}
    &=\int_0^T \left(s^{H-\half}-(s+\ep)^{H-\half}\right)^{\gamma} \ds
    \le \int_0^T \left(s^{\gamma(H-\half)}-(s+\ep)^{\gamma(H-\half)}\right) \ds\\
    &= \frac{(T+\ep)^{\gamma(H-\half)+1}- T^{\gamma(H-\half)+1} + \ep^{\gamma(H-\half)+1}}{\gamma(H-\half)+1}
    \le \frac{\ep^{\gamma(H-\half)+1}}{\gamma(H-\half)+1}.
\end{align*}
The rate of convergence can thus be anything short of~$H$; indeed for any~$\delta\in(0,H)$ one can set~$\gamma=\frac{2}{1-2\delta}$ and obtain 
\begin{align*}
    \norm{K-K^\ep}_{\Ll^\gamma_T}\le  
    (H-\delta)^{\delta-\half}\, \ep^{H-\delta} \le \frac{\ep^{H-\delta}}{\sqrt{H-\delta}}.
\end{align*}
%%%%%%%%%%%%%%%%%%%%%%%%%%%%%%%%
\subsubsection{Multifactor approximation}\label{sec:SVE_comparison_multifactor}
An important stream of the literature about SVEs is concerned with multifactor approximations. They consist in approximating~$X$, solution to~\eqref{eq:SVE_comp} with the (rough) kernel $K(t)=\int_0^\infty \E^{-xt}\mu(\D x)$ by $\widehat{X}^{N,n}$, solution to~\eqref{eq:SVE_comp} with the discretised (and regular) version $\widehat{K}^{N,n}=\sum_{i=1}^n w_i \E^{-x_i t}$ for well-chosen weights and nodes~$(w_i,x_i)_{i=1}^n$. 

The approximation $\widehat{X}^{N,n}$ is in fact Markovian in $\RR^n$, hence the name multifactor approximation. Several authors studied the (rate of) convergence~$\EE\lvert X_t-\widehat{X}^{N,n}_t\lvert^2$ of such approximations~\cite{abi2019multifactor,alfonsi2024approximation,bayer2023markovian}. To fix ideas we set $n\in\NN$ points over $[0,N]$ with $N>0$ and for all~$i=1,\cdots,n$ we let~$x_{i}\in\left[u_{i-1},u_i\right)$ and~$w_i:=\mu\left(\left[u_{i-1},u_i\right)\right)$, where $u_i:=\frac{iN}{n}$. This choice of discretisation is made here for the simplicity of the exposition but it is not optimal and we leave this question for future research.

We proceed in two steps, hence we first introduce the process~$X^N$ which solves SVE~\eqref{eq:SVE_comp} with the truncated kernel~$K^N$:
\begin{align*}
    K^N(t) = \int_0^N \E^{-xt}\mu(\D x)=: \int_0^\infty \E^{-xt} \mu_N(\D x), \quad\text{for any } N>0.
\end{align*}
Based on Corollary~\ref{coro:Comparing_kernels} we present a way of estimating the pathwise distance.
\begin{proposition}\label{prop:multifactor}
    Let $K\in\Ll^\gamma_T$ for some $\gamma>2$ be completely monotone and $\sigma:\RR\to\RR$ be a bounded and Lipschitz continuous function. Then for any $p\ge1$ there is a constant $C>0$ independent of $(x_i,w_i)_{i=1}^n, n,N$ such that
    \begin{align}\label{eq:comp_multifactor1}
        &\EE\left[\sup_{t\in[0,T]}\big\lvert X_t-X^N_t\big\lvert^p\right]^{1/p}
        \le C \int_N^\infty x^{-\frac{1}{\gamma}} \mu(\D x);\\
        &\label{eq:comp_multifactor2}
        \EE\left[\sup_{t\in[0,T]}\big\lvert X^N_t-\widehat{X}^{N,n}_t\big\lvert^p\right]^{1/p}
        \le C\,\frac{\mu([0,N)) N}{n}.
    \end{align}
\end{proposition}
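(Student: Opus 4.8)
The plan is to apply Corollary~\ref{coro:Comparing_kernels} twice, once for each inequality, so the crux reduces to controlling the two relevant kernel norms $\norm{K-K^N}_{\Ll^\gamma_T}$ and $\norm{K^N-\widehat{K}^{N,n}}_{\Ll^\gamma_T}$. For this I must first verify that the hypotheses of the corollary are met in each case. For the first inequality, the pair of kernels is $(K,K^N)$: I note that $K^N$ is completely monotone with measure $\mu_N = \one_{[0,N)}\mu$, and crucially $K-K^N(t)=\int_N^\infty \E^{-xt}\mu(\D x)$ is completely monotone with the nonnegative measure $\one_{[N,\infty)}\mu$, so the sign condition of the corollary holds. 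For the second inequality, the pair is $(K^N,\widehat{K}^{N,n})$; here I should check that $\widehat{K}^{N,n}$ is completely monotone (it is a nonnegative combination of exponentials, so its measure is $\sum_i w_i\delta_{x_i}$) and that the difference has a signed measure that is completely monotone, which again follows from the piecewise structure of the discretisation. The boundedness and Lipschitz assumptions on $\sigma$ are exactly what the corollary needs, so once these structural checks pass, the corollary gives $\EE[\sup_{t}\abs{\,\cdot\,}^p]^{1/p}\le C\norm{\cdot}_{\Ll^\gamma_T}$ for each difference.

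For the first bound \eqref{eq:comp_multifactor1}, I would estimate $\norm{K-K^N}_{\Ll^\gamma_T}$ directly. Writing the tail kernel as $\int_N^\infty \E^{-xt}\mu(\D x)$, I would apply Minkowski's integral inequality to pull the $\Ll^\gamma_T$-norm inside the $\mu$-integral:
\begin{align*}
    \norm{K-K^N}_{\Ll^\gamma_T}
    \le \int_N^\infty \norm{\E^{-x\,\cdot}}_{\Ll^\gamma_T}\,\mu(\D x)
    = \int_N^\infty \left(\int_0^T \E^{-\gamma x t}\dt\right)^{1/\gamma}\mu(\D x).
\end{align*}
The inner integral is bounded by $(\gamma x)^{-1/\gamma}\Gamma(1+1/\gamma)^{1/\gamma}$ after extending to $[0,\infty)$, producing the factor $x^{-1/\gamma}$ and a constant absorbed into $C$; this yields exactly the claimed $\int_N^\infty x^{-1/\gamma}\mu(\D x)$.

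For the second bound \eqref{eq:comp_multifactor2}, I would estimate $\norm{K^N-\widehat{K}^{N,n}}_{\Ll^\gamma_T}$ using the specific discretisation $x_i\in[u_{i-1},u_i)$, $w_i=\mu([u_{i-1},u_i))$ with $u_i=iN/n$. The idea is to write the difference on each cell as $\int_{u_{i-1}}^{u_i}(\E^{-xt}-\E^{-x_i t})\mu(\D x)$ and to control $\abs{\E^{-xt}-\E^{-x_i t}}$ for $x,x_i$ in a common interval of length $N/n$. Using that $t\mapsto t\E^{-x_i t}$ is bounded uniformly (the supremum of $t\E^{-ct}$ over $t\ge0$ is $1/(ce)$, but for a bound independent of the nodes one can simply use $\abs{\E^{-xt}-\E^{-x_it}}\le t\abs{x-x_i}\le (N/n)\,t$ by the mean value theorem), I would bound the integrand pointwise by $(N/n)\,t$ times the local mass, then take the $\Ll^\gamma_T$-norm over the bounded interval $[0,T]$ and sum the local masses to recover $\mu([0,N))$. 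This produces a factor $\mu([0,N))\,N/n$ times a $T$-dependent constant absorbed into $C$, matching \eqref{eq:comp_multifactor2}.

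The main obstacle I anticipate is the second bound: unlike the clean tail estimate of the first part, the discretisation error requires a uniform-in-$t$ control of $\abs{\E^{-xt}-\E^{-x_it}}$ that does not blow up as $t\to 0$ (where the difference vanishes) nor as $t\to\infty$, and one must ensure the resulting constant is genuinely independent of the nodes $(x_i,w_i)$, of $n$, and of $N$, as the statement demands. The mean-value-theorem bound $\abs{x-x_i}\le N/n$ handles the node spacing cleanly, but care is needed to confirm that, after taking the $\Ll^\gamma_T$-norm, the only surviving $N$- and $n$-dependence is the advertised product $\mu([0,N))\,N/n$, with all $t$-integration contributing only a fixed constant. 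Finally, since Corollary~\ref{coro:Comparing_kernels} is stated for all $p>0$ (its proof first establishing the high-$p$ regime via \eqref{eq:BDG_convolution} and then descending by Jensen's inequality), the conclusion for every $p\ge1$ follows immediately from the two kernel-norm estimates.
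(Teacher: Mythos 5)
Your treatment of the first bound \eqref{eq:comp_multifactor1} is correct and is essentially the paper's argument: $K-K^N$ is completely monotone with the non-negative measure $\one_{[N,\infty)}\mu$, so Corollary~\ref{coro:Comparing_kernels} applies, and Minkowski's integral inequality gives $\norm{K-K^N}_{\Ll^\gamma_T}\le \int_N^\infty \big(\int_0^T \E^{-\gamma x t}\dt\big)^{1/\gamma}\mu(\D x)\le \gamma^{-1/\gamma}\int_N^\infty x^{-1/\gamma}\mu(\D x)$. (Minor slip: extending the inner integral to $[0,\infty)$ yields exactly $(\gamma x)^{-1}$; no Gamma factor appears.)

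The second bound \eqref{eq:comp_multifactor2} is where your proposal breaks down. You assert that $K^N-\widehat{K}^{N,n}$ (or its negative) ``has a signed measure that is completely monotone, which again follows from the piecewise structure of the discretisation.'' This is false, and the paper explicitly points it out: neither $K^N-\widehat{K}^{N,n}$ nor $\widehat{K}^{N,n}-K^N$ is completely monotone. By Bernstein's theorem the representing measure of a completely monotone function is a \emph{unique non-negative} measure, whereas here the only candidate is $\sum_{i=1}^n\big(\one_{[u_{i-1},u_i)}\mu - w_i\,\delta_{x_i}\big)$, which on each cell is a spread-out measure minus a point mass of equal total weight --- neither it nor its negative is non-negative unless $\mu$ restricted to the cell is already concentrated at $x_i$. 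Consequently Corollary~\ref{coro:Comparing_kernels} (equivalently, Theorem~\ref{thm:BDG_convolution} applied to the difference kernel) is not available for this pair, and your plan of reducing everything to $\norm{K^N-\widehat{K}^{N,n}}_{\Ll^\gamma_T}$ cannot be executed: that norm estimate, while true, feeds into no applicable BDG inequality. This is not a technicality you can patch by a more careful mean-value-theorem computation; it is the reason the paper switches to a different argument for this step.

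What the paper does instead: it writes, via stochastic Fubini, the troublesome stochastic integral as $\sum_i\int_{u_{i-1}}^{u_i}(U^{x_i}_t-U^x_t)\,\mu(\D x)$ where $U^x_t=\int_0^t\E^{-x(t-s)}\sigma(\widehat{X}^{N,n}_s)\D W_s$, applies Jensen's inequality against the finite measure $\mu|_{[0,N)}$ (this is where the factor $\mu([0,N))^{p-1}$ arises), and then controls $U^{x_i}-U^x$ through the pathwise identity $U^{x_i}_t-U^x_t=\int_0^t\E^{-x_i(t-s)}(x-x_i)U^x_s\ds$, which converts the discretisation error into an \emph{ordinary} Volterra integral of the bounded-in-$L^p$ process $U^x$ against the genuinely completely monotone kernel $\E^{-x_i\cdot}$. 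The bound $\abs{x-x_i}\le N/n$ then enters exactly where you wanted it to, and Gr\"onwall's lemma closes the estimate. If you want to salvage your write-up, you must replace the structural claim about complete monotonicity of the difference by this (or an equivalent) decomposition at the level of the stochastic integrals themselves.
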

\begin{example}
Let~$\mu$ be such that~$\mu(\D x)\le C_\mu x^{-H-\half}\dx$ for some~$H\in(0,\half)$ and constant~$C_\mu>0$. Then we have $\mu([0,N))\le C_\mu N^{\half-H}/(\half-H)$ and  setting~$\gamma=\frac{2}{1-2\delta}$ with~$\delta<H$ we obtain
\begin{align*}
    \int_N^\infty x^{-\frac{1}{\gamma}} \mu(\D x)\le C_\mu \int_N^\infty  x^{-H-\half-\frac{1}{\gamma}}\dx
    \le C_\mu \frac{N^{\delta-H}}{H-\delta}.
\end{align*}
Such an example of kernel is the Gamma kernel~$K(t)=\E^{-\beta t} t^{H-\half}$ with $H\in(0,\half),\beta\ge0$ for which~$\mu(\D x)=\frac{1}{\Gamma(\half-H)} (x-\beta)^{-H-\half}  \one_{x>\beta}\dx$. We can compare the results with the ones obtained in~\cite{alfonsi2024approximation} which do not include the supremum (and with~$p=2$). The analogue to~\eqref{eq:comp_multifactor1} is found in~\cite[Lemma 3.1]{alfonsi2024approximation} and yields a speed proportional to~$N^{-H}$ while the second error~\eqref{eq:comp_multifactor2} is identical to the version without supremum from~\cite[Corollary 3.1]{alfonsi2024approximation}. 
\end{example}
\begin{proof}
The truncated kernel~$K^N$ is defined such that $K-K^N$ is also completely monotone with the non-negative measure~$\mu-\mu_N$. Hence by Corollary~\ref{coro:Comparing_kernels}, for any $p>0$, the distance~$\EE\big[\sup_{t\in[0,T]}\abs{X_t-X^N_t}^p\big]^{1/p}$ is controlled by
\begin{align*}
    \norm{K-K^N}_{\Ll^\gamma_T}
    =\norm{\int_N^\infty \E^{-x\cdot} \mu(\D x)}_{\Ll^\gamma_T}
    \le \int_N^\infty \norm{\E^{-x\cdot}}_{\Ll^\gamma_T} \mu(\D x)
    &= \int_N^\infty \left(\frac{1-\E^{\gamma xT}}{\gamma x}\right)^{\frac{1}{\gamma}} \mu(\D x).
\end{align*}
In the second step we approximate the truncated integral with the discretised one:
\begin{align*}
    \widehat{K}^{N,n}(t) = \sum_{i=1}^n w_i \E^{-x_{i} t}=\int_0^\infty \E^{-xt}\hat\mu_{N,n}(\D x),
\end{align*}
where $\hat\mu_{N,n}(\D x) = \sum_{i=1}^n w_i \delta_{x_{i}}(\D x)$.
Notice that~$\widehat{K}^{N,n}$ is completely monotone but, unfortunately, neither~$K^N-\widehat{K}^{N,n}$ nor $K^N-\widehat{K}^{N,n}$ is. We thus have to resort to a different type of analysis. For the remainder of the proof, $C>0$ will be a constant independent of $(x_i,w_i)_{i=1}^n, n,N$ that may change from line to line. We also set~$p>\frac{2\gamma}{\gamma-2}$ with~$\gamma$ such that~$K\in\Ll^\gamma_T$ (and hence~$K^N\in\Ll^\gamma_T$).
As in the proof of Corollary~\ref{coro:Comparing_kernels}, we obtain thanks to the BDG inequality~\eqref{eq:BDG_convolution}
\begin{align}
    \EE\left[\sup_{t\in[0,T]} \abs{X^N_t - \widehat{X}^{N,n}_t}^{p} \right] 
&\le \EE\left[\sup_{t\in[0,T]}\abs{\int_0^t \big( \widehat{K}^{N,n}(t-s) - K^N(t-s)\big) \sigma(\widehat{X}^{N,n}_s)\D W_s}^p\right] \nonumber\\
&\qquad+ C\norm{K^{N}}_{\Ll^\gamma_T}^p \int_0^T \EE\abs{X^N_s-\widehat{X}^{N,n}_s}^p\ds.\label{eq:diff_XN}
\end{align}
For each $x>0$, define $U^x_t=\int_0^t \E^{-x(t-s)}\sigma(\widehat{X}^{N,n}_s)\D W_s$. In this way the stochastic Fubini theorem and Jensen's inequality entail
\begin{align}\label{eq:diff_KN}
    &\abs{\int_0^t \big( \widehat{K}^{N,n}(t-s) - K^N(t-s)\big) \sigma(\widehat{X}^{N,n}_s)\D W_s}^p
    = \abs{\sum_{i=1}^n \int_{u_{i-1}}^{u_i} \big(U^{x_i}_t - U^x_t\big) \mu(\D x)}^p \nonumber\\
    &=\abs{ \int_{0}^{N} \big(U^{\sum_{i=1}^n x_i \one_{x\in[u_{i-1},u_i)}}_t - U^x_t\big) \mu(\D x)}^p 
    \le \mu([0,N))^{p-1}  \sum_{i=1}^n \int_{u_{i-1}}^{u_i} \abs{U^{x_i}_t - U^x_t}^p \mu(\D x).
\end{align}
Since $U^x$ is solution to the SDE $\D U^x_t=-xU^x_t\dt+\sigma(\widehat{X}^{N,n}_t)\D W_t$ with $U^x_0=0$, the equation~$\D(U^{x_i}_t - U^x_t) = -x_i(U^{x_i}_t-U^x_t)\dt +(x-x_i) U^x_t\dt$ holds almost surely, which solution can be expressed as~$U^{x_i}_t-U^x_t=\int_0^T \E^{-x_i(t-s)} (x-x_i)U^x_s\ds$. For any $\gamma>2$, the Volterra BDG inequality of Theorem~\ref{thm:BDG_convolution} yields 
\begin{align}\label{eq:diff_Ux}
    \EE\left[\sup_{t\in[0,T]}\abs{U^{x_i}_t-U^x_t}^p\right]
    &\le C \abs{x_i-x}^p\left(\int_0^T \E^{-\gamma x_i s}\ds\right)^{p/\gamma}  \int_0^T \EE\abs{U^x_s}^p\ds.
\end{align}
By the classical BDG and Jensen's inequalities we have
\begin{align*}
    \EE\abs{U^x_t}^p
    \le \bm{b}_p\EE \left(\int_0^t \E^{-2x(t-s)}\sigma(\widehat{X}^{N,n}_s)^2\ds\right)^{p/2} \le \bm{b}_p  \norm{\sigma}^p_\infty T^{p/2-1}.
\end{align*}
Noticing that~$\abs{x-x_i}\le N/n$, Equation \eqref{eq:diff_Ux} entails 
\begin{align}\label{eq:sum_diff_Ux}
    \sum_{i=1}^n \int_{u_{i-1}}^{u_i} \EE\left[\sup_{t\in[0,T]}\abs{U^{x_i}_t - U^x_t}^p\right]\mu(\D x)
    \le C \mu([0,N)) \left(\frac{N}{n}\right)^p.
\end{align}
We conclude by combining the estimates from Equations \eqref{eq:diff_XN}, \eqref{eq:diff_KN} and \eqref{eq:sum_diff_Ux} with Gr\"onwall's lemma:
\begin{align*}
    \EE\left[\sup_{t\in[0,T]} \abs{X^N_t - \widehat{X}^{N,n}_t}^{p} \right] 
\le \left(\frac{\mu([0,N)) N}{n}\right)^p  \E^{C\norm{K^{N}}_{\Ll^\gamma_T}^p}.
\end{align*}
The constant is obtained from the inequality $K^N\le K$ and the case~$p\le 2$ via Jensen's inequality.
\end{proof}

%%%%%%%%%%%%%%%%%%%%%%%%%%%%%%%%%%%%%%%%%%%%%
\subsection{SVEs with linear drift}\label{sec:SVE_linear}
In this section we consider a mean-reverting type of SVE with linear drift
\begin{align*}
    X_t = x_0(t) - \lambda\int_0^t K(t-s)X_s\ds + \int_0^t K(t-s)\phi(s)\D W_s,
\end{align*}
where $\lambda>0$, $x_0:\RR_+\to\RR^d$ is a continuous function and $\phi:\RR_+\to\RR^{d\times m}$ is a previsible process. Moreover, for any $T>0$ we assume that $K\in \Ll^2_T$ is such that $\lambda K$ has a resolvent, that is 
a function~$R_\lambda:[0,T]\to\RR^{d\times d}$ such that~$\lambda K -R_\lambda = \lambda K\ast R_\lambda$ where $\ast$ denotes the convolution (we refer to \cite[Chapter 2]{gripenberg1990volterra} for more details). If such a resolvent exists then it is unique.
The variation of constants formula of Volterra type~\cite[Chapter 2, Theorem 3.5]{gripenberg1990volterra} allows to express $X_t$ as
\begin{align}\label{eq:variation_constant}
    X_t = x_0(t) - \int_0^t R_\lambda(t-s)x_0(s)\ds + \frac{1}{\lambda}\int_0^t R_\lambda(t-s)\phi(s)\D W_s.
\end{align}
This formulation exploits the mean-reverting property to essentially replace the kernel $\lambda K$ by a kernel~$R_\lambda$ that decays more rapidly. As an application of Proposition \ref{prop:BDG_convo_uniform}, this allows to derive a uniform-in-time bound.
\begin{corollary}\label{coro:uniform_time}
    Assume that $R_\lambda$ is completely monotone with measure $\mu_\lambda$. If there exists $p> 2$ such that~$\int_0^\infty \EE\abs{\phi(s)}^p\ds<\infty$ and
    \begin{align}\label{eq:condition_bound_infty}
        C_{p,x,\lambda,\mu}:=\sup_{t\ge0} \abs{x_0(t)} +\lim_{t\to\infty} \abs{\int_0^t R_\lambda(t-s)x_0(s)\ds}+\left( \int_0^\infty x^{\frac{2-p}{2p}}\mu_\lambda(\D x)\right)^p<\infty,
    \end{align}
    then there is another constant~$C_p>0$ depending only on~$p$ such that
    \begin{align*}
        \EE\left[\sup_{t\ge0} \abs{X_t}^p\right] \le C_p C_{p,x,\lambda,\mu_\lambda} \int_0^\infty \EE\abs{\phi(s)}^p\ds.
    \end{align*}
    In the particular case $d=m=1$, $x_0$ bounded and $K(t)=t^{H-\half}/\Gamma(H+\half)$ for $H\in(0,\half)$, $R_\lambda$ is completely monotone and satisfies~\eqref{eq:condition_bound_infty} for any~$p>1/H$.
\end{corollary}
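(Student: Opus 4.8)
The plan is to apply Proposition~\ref{prop:BDG_convo_uniform} directly to the variation-of-constants representation \eqref{eq:variation_constant}, treating each of the three terms separately. Writing $X_t = A(t) - B(t) + \frac{1}{\lambda}Z_t$, where $A(t) := x_0(t)$, $B(t) := \int_0^t R_\lambda(t-s)x_0(s)\ds$, and $Z_t := \int_0^t R_\lambda(t-s)\phi(s)\D W_s$, I would use the elementary inequality $\abs{a-b+c}^p \le 3^{p-1}(\abs{a}^p + \abs{b}^p + \abs{c}^p)$ to split the supremum. The deterministic terms $A$ and $B$ are controlled pathwise: $\sup_{t\ge0}\abs{A(t)}^p = (\sup_{t\ge0}\abs{x_0(t)})^p$ and $\sup_{t\ge0}\abs{B(t)}^p$ are both finite by the first two summands in the definition of $C_{p,x,\lambda,\mu}$ in \eqref{eq:condition_bound_infty}.

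**For the stochastic term**, the key observation is that the hypothesis of Corollary~\ref{coro:uniform_time} assumes $R_\lambda$ is completely monotone with measure $\mu_\lambda$ and that $\int_0^\infty x^{\frac{2-p}{2p}}\mu_\lambda(\D x) < \infty$; this last quantity is exactly $M_p$ of \eqref{eq:condition_infty} with kernel $R_\lambda$. Hence Proposition~\ref{prop:BDG_convo_uniform} part 1 applies verbatim and gives
\begin{align*}
    \EE\left[\sup_{t\ge0}\abs{Z_t}^p\right] \le C_{p,d,m} M_p^p \int_0^\infty \EE\abs{\phi(s)}^p\ds.
\end{align*}
Gathering the three bounds, absorbing the factors $3^{p-1}$ and $\lambda^{-p}$ and the constant $C_{p,d,m}$ into a single constant $C_p$, and recognising that each term is bounded by (a piece of) $C_{p,x,\lambda,\mu_\lambda}$ times $\int_0^\infty \EE\abs{\phi(s)}^p\ds$ — noting the deterministic terms contribute a bounded additive constant that can be folded into the product after using finiteness of the noise integral — yields the claimed inequality. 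I would be slightly careful that the deterministic terms do not literally carry a $\int_0^\infty \EE\abs{\phi}^p$ factor, so the cleanest route is to bound them by $C_{p,x,\lambda,\mu_\lambda}$ directly and note that the statement's right-hand side already contains this constant.

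**For the concrete case** $d=m=1$, $x_0$ bounded, $K(t)=t^{H-\half}/\Gamma(H+\half)$, I would identify $R_\lambda$ via the resolvent equation $\lambda K - R_\lambda = \lambda K \ast R_\lambda$. For the fractional kernel this is classical: the resolvent is precisely the Mittag-Leffler-type kernel $R_\lambda(t)=\lambda t^{H-\half} E_{H+\half,H+\half}(-\lambda t^{H+\half})$ appearing in Example~\ref{ex:kernels}(3) and Proposition~\ref{prop:BDG_convo_uniform}(2b) (up to the normalisation constant), which is completely monotone with the explicitly given measure $\mu_\lambda$. The finiteness of $\int_0^\infty x^{\frac{2-p}{2p}}\mu_\lambda(\D x)$ for $p>1/H$ is then exactly the content of Proposition~\ref{prop:BDG_convo_uniform}(2b), already proved. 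It remains only to verify the two deterministic conditions in \eqref{eq:condition_bound_infty}: $\sup_t\abs{x_0(t)}<\infty$ holds by assumption, and $\lim_{t\to\infty}\abs{\int_0^t R_\lambda(t-s)x_0(s)\ds}$ is finite because $R_\lambda$ is integrable on $[0,\infty)$ (its Laplace transform at $0$ is finite, giving $\int_0^\infty R_\lambda(s)\ds = 1$ from the resolvent identity) and $x_0$ is bounded, so the convolution stays bounded uniformly in $t$.

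**The main obstacle** I anticipate is the identification and integrability of the resolvent $R_\lambda$ in the explicit case: one must confirm that the resolvent of the fractional kernel is exactly the completely monotone Mittag-Leffler kernel of Example~\ref{ex:kernels}(3), and — more delicately — establish the global integrability $R_\lambda \in L^1([0,\infty))$ needed to bound the deterministic convolution term $B$. This relies on the algebraic decay $R_\lambda(t)\sim c\, t^{-H-\frac32}$ as $t\to\infty$ (visible from the asymptotics of the Mittag-Leffler function), which is integrable at infinity for $H>0$, together with the resolvent equation evaluated through Laplace transforms to pin down $\int_0^\infty R_\lambda(s)\ds$. The rest is routine bookkeeping of constants.
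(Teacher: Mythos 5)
Your proposal is correct and follows essentially the same route as the paper: split the variation-of-constants representation \eqref{eq:variation_constant} into its deterministic and stochastic parts, bound the former by the first two summands of \eqref{eq:condition_bound_infty}, apply Proposition~\ref{prop:BDG_convo_uniform} to the latter, and handle the explicit fractional case by identifying the Mittag-Leffler resolvent and invoking Proposition~\ref{prop:BDG_convo_uniform}(2b). The only minor difference is in controlling $\lim_{t\to\infty}\big\lvert\int_0^t R_\lambda(t-s)x_0(s)\,\D s\big\rvert$: the paper cites the bound $E_{\alpha,\alpha+1}(-\lambda t^{\alpha})\le C_{\lambda,\alpha}(t^{-2\alpha}+t^{-\alpha})$ applied to $\int_0^t R_\lambda(s)\D s$, whereas you use the $L^1([0,\infty))$-integrability of $R_\lambda$ via its tail asymptotics and Laplace transform; both are valid.
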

\begin{proof}
    We study the expression of $X$ obtained in \eqref{eq:variation_constant}. Firstly, we have
    \begin{align*}
        \sup_{t\ge0} \abs{x_0(t)-\int_0^t R_\lambda(t-s)x_0(s)\ds} \le  \sup_{t\ge0}\abs{x_0(t)}+ \lim_{t\to\infty} \abs{\int_0^t R_\lambda(t-s)x_0(s)\ds}, 
    \end{align*}
    which is finite by assumption. Then we turn our attention to the stochastic Volterra integral~$\int_0^t R_\lambda(t-s)\phi(s)\D W_s$ to which we can apply Proposition~\ref{prop:BDG_convo_uniform} by virtue of Condition~\eqref{eq:condition_bound_infty}.
  
    Focusing now on the case $K(t)=t^{H-\half}/\Gamma(H+\half)$, $H\in(0,\half)$, the resolvent of $\lambda K$ reads 
    \begin{align*}
        R_\lambda(t)
        =\lambda t^{H-\half} E_{H+\half,H+\half}(-\lambda t^{H+\half}),
    \end{align*}
    where $E_{\alpha,\beta}(z)=\sum_{n\ge0} \frac{z^n}{\Gamma(\alpha n+\beta)}$ is the Mittag-Leffler function. First, Proposition \ref{prop:BDG_convo_uniform} states that $R_\lambda$ is completely monotone with a measure $\mu_\lambda$ that satisfies $\int_0^\infty x^{\frac{2-p}{2p}}\mu_\lambda(\D x)<\infty$ for any $p>1/H$. 
    Since $x_0$ is bounded and $R_\lambda$ is positive we have
    \begin{align}\label{eq:int_Rlambda}
        \int_0^t R_\lambda(t-s) x_0(s) \ds 
        \le \sup_{t\ge0}\abs{x_0(t)} 
        \int_0^t R_\lambda(s)\ds
        =\sup_{t\ge0}\abs{x_0(t)}  \lambda t^{H+\half} E_{H+\half,H+\frac32}(-\lambda t^{H+\half}).
    \end{align}
    It is proved in \cite[Lemma 2.5]{wang2018note} (see also \cite{gorenflo2002computation} but without a proof) that $E_{\alpha,\alpha+1}(-\lambda t^{\alpha})\le C_{\lambda,\alpha} (t^{-2\alpha}+t^{-\alpha})$ for any $\alpha\in(0,1]$ and where $C_{\lambda,\alpha}>0$. Therefore the limit  of \eqref{eq:int_Rlambda} as $t$ goes to infinity is finite. 
\end{proof}

%%%%%%%%%%%%%%%%%%%%%%%%%%%%%%%%%%%%%%%%%%%%%

\section{Formerly known BDG inequalities}\label{sec:nonconv}
The BDG inequalities presented in this section are not new; however, they are likely to be unfamiliar to certain segments of the community who may find them useful. We believe it would be beneficial to compile them in this note for easier reference. 
Moreover, they apply to kernels which are not of convolution type. 

%%%%%%%%%%
%%%%%%%%%%%%%%%%%%%%%%%%%%%%%%%%%%%%%%%%%%%%%%%%%%%%%%%%%%%%%%%%%%%

\subsection{Decreusefond's inequalities}
This section intends to shed some light on Decreusefond's work on stochastic Volterra integrals~\cite{decreusefond2002regularity}, where the first BDG inequality for such integrals can be traced back to. The results of this paper are stated on the interval~$[0,1]$ and in the one-dimensional case~$d=m=1$, hence this is how we present them. We write $\Ll^p$ in place of $\Ll^p_1$ in this section. 

We need a couple of definitions to start with. For any~$\alpha\in(0,1)$ and~$f\in \Ll^1$, define the fractional integral
\begin{align*}
    (I^\alpha f)(t) = \frac{1}{\Gamma(\alpha)} \int_0^t (t-s)^{\alpha-1} f(s)\ds,\quad t\in[0,1].
\end{align*}
Further introduce the space~$\Ii^\alpha_p:=I^\alpha(\Ll^p)$ for all~$p\ge 1$. Denoting~$I^{-\alpha}$ the inverse map of~$I^\alpha$, we equip this space with the norm~$\norm{f}_{\Ii^\alpha_p}:=\norm{I^{-\alpha}f}_{\Ll^p}$. For a kernel~$K:[0,1]^2\to\RR$ we denote by~$V_K:\Ll^1\to\RR$ the linear map~$V_K f(t)=\int_0^1 K(t,s)f(s)\ds$. Finally we set~$\theta(x)=\frac{2x}{2-x}$ for all~$x\le 2$.
\begin{theorem}\cite[Theorem 3.1]{decreusefond2002regularity} \label{thm:Decreusefond}
    Assume that there exists~$\alpha>0$ and~$\eta\le 2$ such that~$V_K$ is continuous from~$\Ll^2$ to~$\Ii^{\alpha+\half}_2$ and from~$\Ll^\eta$ to~$\Ii^{\alpha}_{\theta(\eta)}$. Let~$p=\theta(\eta)$ and assume furthermore that~$\phi$ is a previsible process satisfying for all~$t\in[0,1]$
    \begin{align*}
        \int_0^1 \EE\abs{\phi(s)}^p \ds + \int_0^1 K(t,s)^2\EE\abs{\phi(s)}^2 \ds <\infty.
    \end{align*}
    Then~$\left\{\int_0^t K(t,s)\phi(s)\D W_s,\, t\in[0,1]\right\}$ has a version which belongs to~$\bigcap_{\gamma<\alpha} \Ii^{\gamma}_p$ and, for any~$\gamma<\alpha$,
    \begin{align*}
        \EE\norm{\int_0^\cdot K(\cdot,s)\phi(s)\D W_s}_{\Ii^\gamma_p}^p \le c_{\gamma,\eta}^p \int_0^1 \EE\abs{\phi(s)}^p\ds.
    \end{align*}
    The constant is given by~$c_{\gamma,\eta}=\sup_{g:\norm{g}_{\Ll^\eta}=1} \norm{(I^{-\gamma} \circ V_K)g}_{\Ll^{p}}$.
\end{theorem}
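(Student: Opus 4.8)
The plan is to turn the $\Ii^\gamma_p$-estimate into an $\Ll^p$-estimate for a stochastic integral against an explicit kernel, and then to reduce the latter, via BDG and Minkowski, to a purely deterministic operator-norm inequality. Write $V_K^\gamma:=I^{-\gamma}\circ V_K$. By the hypothesis $V_K:\Ll^\eta\to\Ii^\alpha_{\theta(\eta)}$ and the identity $I^{-\gamma}\Ii^\alpha_p=\Ii^{\alpha-\gamma}_p\hookrightarrow\Ll^p$ for $\gamma<\alpha$, the operator $V_K^\gamma$ is bounded from $\Ll^\eta$ to $\Ll^p$ with operator norm exactly $c_{\gamma,\eta}$, and being the composition of the fractional integral $I^{\alpha-\gamma}$ of positive order with the positive operator $I^{-\alpha}\circ V_K$ it carries a non-negative integral kernel $k_\gamma(t,s)$. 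A stochastic Fubini argument (as in \cite[Theorem 65]{protter2005stochastic}) then identifies $I^{-\gamma}$ applied to the process $t\mapsto\int_0^t K(t,s)\phi(s)\D W_s$ with $\int_0^1 k_\gamma(t,s)\phi(s)\D W_s$; the auxiliary hypothesis $V_K:\Ll^2\to\Ii^{\alpha+\half}_2$ (equivalently, $V_K^\gamma$ bounded on $\Ll^2$) supplies the square-integrability that legitimises this interchange. Thus $\EE\norm{\int_0^\cdot K(\cdot,s)\phi(s)\D W_s}_{\Ii^\gamma_p}^p=\int_0^1\EE\abs{\int_0^1 k_\gamma(t,s)\phi(s)\D W_s}^p\dt$.

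First I would apply the Burkholder–Davis–Gundy inequality in the supremum-free form \eqref{eq:BDG_no_sup} pointwise in $t$, bounding the integrand by $\mathbf{b}_p\,\EE(\int_0^1 k_\gamma(t,s)^2\abs{\phi(s)}^2\ds)^{p/2}$. Since $p\ge2$, Minkowski's integral inequality in $L^{p/2}(\Omega)$ lets me pull the expectation through the $s$-integral: setting $g(s):=(\EE\abs{\phi(s)}^p)^{2/p}\ge0$, I obtain $\EE(\int_0^1 k_\gamma(t,s)^2\abs{\phi(s)}^2\ds)^{p/2}\le(\int_0^1 k_\gamma(t,s)^2 g(s)\ds)^{p/2}=(V_{S}g)(t)^{p/2}$, where $V_S$ is the deterministic integral operator with kernel $k_\gamma^2$. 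Integrating in $t$ and noting $\int_0^1 g(s)^{p/2}\ds=\int_0^1\EE\abs{\phi(s)}^p\ds$, the whole estimate collapses to the deterministic claim $\norm{V_S g}_{\Ll^{p/2}}\le c_{\gamma,\eta}^2\,\norm{g}_{\Ll^{p/2}}$.

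This last inequality is the heart of the matter, and I expect it to be the main obstacle. It asserts that squaring a positive kernel which smooths by half a derivative (the content of $V_K^\gamma:\Ll^\eta\to\Ll^p$) produces a kernel that is merely bounded on $\Ll^{p/2}$. The exponents are forced to cooperate precisely because $p=\theta(\eta)$, i.e. $\tfrac1\eta-\tfrac1p=\half$, which is exactly the identity $\eta'=2\,(p/2)'$; this is where the definition of $\theta$ earns its keep and is the reason the constant comes out as $c_{\gamma,\eta}^2$ rather than a product of two unrelated norms. I would first verify the bound transparently on rank-one kernels $k_\gamma(t,s)=a(t)b(s)$, where it reduces to the matching of the Hölder exponents above, and then treat the general case by duality, estimating $\int_0^1\int_0^1 k_\gamma(t,s)^2 g(s)u(t)\,\ds\dt$ for $g\in\Ll^{p/2}$, $u\in\Ll^{(p/2)'}$. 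For the model singular kernel $k_\gamma(t,s)\sim(t-s)^{-\half}\one_{s<t}$ the squared kernel is $(t-s)^{-1}\one_{s<t}$, so $V_S$ becomes a Hardy/Hilbert-type singular operator, bounded on $\Ll^{p/2}$ exactly when $p>2$; the delicate point—and the step most in need of care—is to obtain the sharp constant $c_{\gamma,\eta}^2$ for general positive kernels rather than just the explicit fractional ones, for which the available fractional-integration identities can be exploited directly.

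Finally, once the $\Ii^\gamma_p$-bound is established for every $\gamma<\alpha$, I would deduce the existence of a single version lying in $\bigcap_{\gamma<\alpha}\Ii^\gamma_p$ from the nesting $\Ii^{\gamma'}_p\subset\Ii^\gamma_p$ for $\gamma<\gamma'<\alpha$ together with completeness of these spaces, and recover the pathwise continuity statement through the Sobolev-type embedding $\Ii^\gamma_p\hookrightarrow\Cc^{\gamma-1/p}$ valid for $\gamma>1/p$, which also explains why the conclusion is phrased for all $\gamma<\alpha$ rather than at the endpoint.
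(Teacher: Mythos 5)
The paper does not prove this theorem; it quotes it verbatim from Decreusefond, so there is no internal proof to compare against and I can only assess your argument on its own terms. Your reduction --- identify $I^{-\gamma}$ of the process with $\int_0^1 k_\gamma(t,s)\phi(s)\D W_s$, where $k_\gamma$ is the kernel of $V_K^\gamma=I^{-\gamma}\circ V_K$, apply BDG pointwise in $t$, then Minkowski in $L^{p/2}(\Omega)$ --- is a reasonable skeleton, but the deterministic inequality you correctly single out as the heart of the matter is \emph{false} in the generality in which you pose it. For a non-negative kernel $k$ with $T_k:\Ll^\eta\to\Ll^p$ bounded and $\frac{1}{\eta}-\frac{1}{p}=\half$, the squared-kernel operator need not be bounded on $\Ll^{p/2}$ by $\norm{T_k}^2$: take $\eta=4/3$, $p=4$ and the kernel built from the matrix $k=\bigl(\begin{smallmatrix}1&0\\1&1\end{smallmatrix}\bigr)$ via indicators of $[0,\half)$ and $[\half,1]$. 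The entrywise square is $k$ itself, and one checks $\norm{k}_{\ell^2\to\ell^2}=\sqrt{(3+\sqrt{5})/2}\approx 1.618$ while $\norm{k}_{\ell^{4/3}\to\ell^4}^2\approx 1.494$ (the transplantation to $[0,1]$ rescales both sides by the same factor $\half$, so the violation survives); tensor powers of this example drive the ratio to infinity, so no constant $C_{\eta,p}$ repairs the claim. Rank-one kernels, on which you test the inequality, are exactly the case where it holds with equality, which is why they are not probative. What excludes such counterexamples --- and what your proof throws away --- is the first hypothesis $V_K:\Ll^2\to\Ii^{\alpha+\half}_2$. You relegate it to legitimising stochastic Fubini, but via the embedding $\Ii^{\beta}_2\hookrightarrow\Cc^{\beta-\half}$ for $\beta>\half$ it gives $V_K^\gamma:\Ll^2\to\Ll^\infty$, i.e.\ $\sup_t\norm{k_\gamma(t,\cdot)}_{\Ll^2}<\infty$; this is a quantitative input (my piecewise-constant example violates it, since its output is discontinuous in $t$) that must enter the proof of the squared-kernel bound. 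Any correct argument has to use the two continuity hypotheses in tandem.

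Two further problems. The non-negativity of $k_\gamma$, on which your entire scheme rests, is asserted without justification: $I^{-\alpha}$ is a fractional \emph{derivative}, not a positive operator, so neither $I^{-\alpha}\circ V_K$ nor $I^{-\gamma}\circ V_K$ need have a non-negative kernel even when $K\ge0$ (and the statement does not assume $K\ge0$); for signed kernels the passage from $T_{k}$ to $T_{k^2}$ fails badly, since oscillation makes $\norm{T_k}$ small while $k^2$ does not oscillate. Finally, two smaller slips: the BDG step unavoidably introduces $\mathbf{b}_p$, so your route cannot yield the bare constant $c_{\gamma,\eta}^p$; and your model operator with kernel $(t-s)^{-1}\one_{s<t}$ is not bounded on any $\Ll^q([0,1])$ --- it already diverges logarithmically on $g\equiv1$ --- the fractional examples being saved only because $\gamma<\alpha$ strictly, which keeps the exponent of the squared kernel strictly above $-1$.
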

This immediately leads to a BDG inequality since~$\Ii^{\gamma}_p$ is continuously embedded in the space of $(\gamma-1/p)$-H\"older continuous functions for any~$\gamma>1/p$. 
\begin{corollary}
    Under the same assumptions as Theorem~\ref{thm:Decreusefond} and for any $\gamma<\alpha$, there is another constant $C_{p,\gamma}>0$ such that 
    \begin{align}\label{eq:BDG_decreusefond}
        \EE\left[\sup_{t\in[0,1]} \abs{\int_0^t K(t,s)\phi(s)\D W_s}^p\right]\le C_{p,\gamma} \int_0^1 \EE\abs{\phi(s)}^p\ds.
    \end{align}
\end{corollary}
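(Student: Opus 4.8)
The plan is to derive the BDG inequality \eqref{eq:BDG_decreusefond} from Theorem~\ref{thm:Decreusefond} by exploiting the continuous embedding of the fractional integral space $\Ii^\gamma_p$ into a space of H\"older continuous functions. First I would fix the ingredients from the theorem's conclusion: under the stated hypotheses, the process $M_t:=\int_0^t K(t,s)\phi(s)\D W_s$ admits a version lying in $\bigcap_{\gamma<\alpha}\Ii^\gamma_p$, together with the quantitative bound on $\EE\norm{M}_{\Ii^\gamma_p}^p$. The key structural fact I would invoke is the Sobolev-type embedding theorem for fractional integrals: for $\gamma p>1$, any $f\in\Ii^\gamma_p=I^\gamma(\Ll^p)$ is (a.e. equal to) a $(\gamma-\nicefrac{1}{p})$-H\"older continuous function on $[0,1]$, with an embedding constant $c_{\mathrm{emb}}=c_{\mathrm{emb}}(p,\gamma)$ depending only on $p$ and $\gamma$; this is the classical Hardy--Littlewood result (see e.g.\ \cite{decreusefond2002regularity} or standard references on fractional calculus).

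Next I would combine these two facts. Since H\"older continuity on the compact interval $[0,1]$ controls the uniform norm --- explicitly, $\sup_{t\in[0,1]}\abs{f(t)}\le \norm{f}_{\Cc^{\gamma-1/p}}\le c_{\mathrm{emb}}\norm{f}_{\Ii^\gamma_p}$ once one accounts for the value at a base point, which here is automatic as $M_0=0$ --- I obtain the pathwise pointwise bound
\begin{align*}
    \sup_{t\in[0,1]}\abs{M_t}\le c_{\mathrm{emb}}\norm{M}_{\Ii^\gamma_p}\qquad\text{almost surely.}
\end{align*}
Raising to the $p$-th power and taking expectations, then applying the bound from Theorem~\ref{thm:Decreusefond}, yields
\begin{align*}
    \EE\left[\sup_{t\in[0,1]}\abs{M_t}^p\right]
    \le c_{\mathrm{emb}}^p\,\EE\norm{M}_{\Ii^\gamma_p}^p
    \le c_{\mathrm{emb}}^p\, c_{\gamma,\eta}^p\int_0^1\EE\abs{\phi(s)}^p\ds,
\end{align*}
so that the claim holds with $C_{p,\gamma}:=c_{\mathrm{emb}}^p c_{\gamma,\eta}^p$. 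The only genuine constraint to verify is the compatibility $\gamma p>1$ needed for the embedding to produce H\"older continuity; since $p=\theta(\eta)$ is fixed and $\gamma$ ranges over $(0,\alpha)$, this requires $\alpha p>1$ so that an admissible $\gamma\in(\nicefrac{1}{p},\alpha)$ exists, and I would either impose this mild regularity condition or note that it is what makes the supremum finite.

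The main obstacle I anticipate is not the logical structure, which is a short two-step argument, but rather pinning down the embedding $\Ii^\gamma_p\hookrightarrow\Cc^{\gamma-1/p}$ with a constant depending only on $p,\gamma$ and not on the particular function, so that the constant $C_{p,\gamma}$ in the conclusion is genuinely deterministic and uniform. This is where care is needed: one must ensure that the H\"older seminorm of $I^\gamma g$ is bounded by $\norm{g}_{\Ll^p}$ up to a universal constant, which follows from writing $I^\gamma g(t)-I^\gamma g(t')$ as an integral against the kernel difference $(t-s)^{\gamma-1}-(t'-s)^{\gamma-1}$ and applying H\"older's inequality, the resulting kernel integral being finite precisely when $\gamma p>1$. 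Once this embedding constant is secured, the remainder of the proof is the routine chaining of inequalities described above.
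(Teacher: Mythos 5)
Your proposal is correct and follows exactly the paper's route: the paper's entire justification is the one-line observation that $\Ii^{\gamma}_p$ embeds continuously into the $(\gamma-\frac{1}{p})$-H\"older functions for $\gamma>1/p$, which combined with Theorem~\ref{thm:Decreusefond} and $M_0=0$ gives the supremum bound. Your additional care about the embedding constant depending only on $(p,\gamma)$ and about the existence of an admissible $\gamma\in(1/p,\alpha)$ is sound and, if anything, slightly more explicit than the paper.
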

Decreusefond then proceeds to verify that these assumptions are satisfied for two choices of kernels. For~$H\in(0,1)$, he considers the power-law kernel~$J_H(t,s)=\Gamma(H+\half)^{-1} (t-s)^{H-\half}\one_{t>s}$ and
\begin{align*}
    K_H(t,s)=J_H(t,s) F(H-1/2,1/2-H,H+1/2,1-t/s),
\end{align*}
where~$F$ is the Gauss hypergeometric function. The latter kernel gives rise to the fractional Brownian motion of Mandelbrot and Van Ness, as shown in~\cite[Corollary 3.1]{decreusefond1999stochastic}.
For those kernels it is proved, in \cite{decreusefond2002regularity} in Theorem 4.1 and 4.2 respectively, that the conditions of Theorem~\ref{thm:Decreusefond} hold for any~$p\ge2$ and any~$H>1/p$. Remark 4.1 of that paper is the first instance (as far as we are aware) of a BDG inequality for stochastic Volterra integrals.

%%%%%%%%%%%%%%%%%%%%%%%%%%%%%%%%%%%%%%%%%%%%%%%%%%%%%%%%%%%%%%%%%%%%%%%%%%%%%%%%%%%%%

\subsection{Kolmogorov's continuity criterion}
This will probably not come as a big surprise to Volterra experts that Kolmogorov's continuity theorem can be used to derive a BDG inequality for stochastic Volterra integrals. This was actually proved in \cite[Lemma 3.4]{zhang2010stochastic} in the much more general context of Banach space valued processes. We adapt their notations and present this result in the context of this paper. 
\begin{proposition}
    Let $K\in L^2([0,T]^2;\RR^{d\times d})$ and assume there exist~$\gamma>2,\,\beta>0$ and~$C_K>0$ such that, for all~$0\le s<t\le T$,
    \begin{align}\label{eq:kernel_nonconvo_condition}
        \int_s^t \abs{K(t,r)}^\gamma\dr + \int_0^s \abs{K(t,r)-K(s,r)}^\gamma\dr\le C_K(t-s)^{\beta}.
    \end{align}
    Let~$p>\max\left(\frac{2\gamma}{\gamma-2},\frac{\gamma}{\beta}\right)$ and~$\phi$ be an~$\RR^{d\times m}$-valued previsible process such that $\int_0^T \EE\abs{\phi(s)}^p\ds < \infty$. 
    Then there are constants $C_0,C_1>0$ such that
    \begin{align*}
        \EE\abs{\int_0^t K(t,s)\phi(s)\D W_s-\int_0^{t'} K(t',s)\phi(s)\D W_s}^p \le C_0 \abs{t-t'}^{\frac{\beta p}{\gamma}} \int_0^T \EE\abs{\phi(s)}^p\ds,
    \end{align*}
    and by Kolmogorov's continuity theorem it holds
    \begin{align}\label{eq:BDG_nonconv}
    \EE\left[\sup_{t\in[0,T]} \abs{\int_0^t K(t,s)\phi(s)\D W_s}^p\right]\le C_1 \int_0^T \EE\abs{\phi(r)}^p\dr.
\end{align}
\end{proposition}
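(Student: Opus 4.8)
The plan is to first prove the increment estimate and then feed it into a quantitative Kolmogorov continuity theorem. Writing $Z_t:=\int_0^t K(t,s)\phi(s)\D W_s$ and assuming without loss of generality $t'<t$, the natural first move is to split the increment so that the two pieces align exactly with the two terms appearing in the hypothesis \eqref{eq:kernel_nonconvo_condition}:
\begin{align*}
    Z_t-Z_{t'}=\int_{t'}^t K(t,s)\phi(s)\D W_s+\int_0^{t'}\big(K(t,s)-K(t',s)\big)\phi(s)\D W_s.
\end{align*}
By $(a+b)^p\le 2^{p-1}(a^p+b^p)$ it then suffices to bound each summand separately.

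For the first summand I would combine the supremum-free inequality \eqref{eq:BDG_no_sup} with Hölder's inequality in the time variable, using the conjugate exponents $\tfrac{\gamma}{2}$ and $\tfrac{\gamma}{\gamma-2}$ to split the kernel from the integrand. Since the kernel is deterministic this gives, with $q:=\tfrac{2\gamma}{\gamma-2}$,
\begin{align*}
    \EE\abs{\int_{t'}^t K(t,s)\phi(s)\D W_s}^p
    &\le \mathbf{b}_p\,\EE\abs{\int_{t'}^t \abs{K(t,s)}^2\abs{\phi(s)}^2\ds}^{p/2}\\
    &\le \mathbf{b}_p\left(\int_{t'}^t \abs{K(t,s)}^\gamma\ds\right)^{p/\gamma}\EE\left(\int_{t'}^t\abs{\phi(s)}^{q}\ds\right)^{p/q},
\end{align*}
and the kernel factor is at most $\big(C_K(t-t')^\beta\big)^{p/\gamma}$ directly from \eqref{eq:kernel_nonconvo_condition}, producing the advertised power $(t-t')^{\beta p/\gamma}$. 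The identical computation applies to the second summand, now invoking the bound on $\int_0^s\abs{K(t,r)-K(s,r)}^\gamma\dr$.

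It remains to control the integrand factor, and this is where the first constraint on $p$ enters: the assumption $p>\tfrac{2\gamma}{\gamma-2}$ means precisely $p/q>1$, so Jensen's inequality for the probability measure $\tfrac{\D s}{t-t'}$ yields $\big(\int_{t'}^t\abs{\phi}^q\ds\big)^{p/q}\le (t-t')^{p/q-1}\int_{t'}^t\abs{\phi}^p\ds$. Taking expectations and bounding $(t-t')^{p/q-1}\le T^{p/q-1}$ reduces everything to $\int_0^T\EE\abs{\phi(s)}^p\ds$; assembling the two summands gives the first displayed inequality with an explicit $C_0=C_0(p,\gamma,\beta,C_K,T)$, uniformly in $t,t'\in[0,T]$.

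Finally, the second constraint $p>\tfrac{\gamma}{\beta}$ is exactly what makes the Hölder exponent satisfy $\tfrac{\beta p}{\gamma}>1$, so the increment bound is of Kolmogorov--Chentsov type. The quantitative form of the continuity theorem (via the Garsia--Rodemich--Rumsey lemma) then furnishes a continuous modification together with a moment bound on its Hölder seminorm proportional to $C_0\int_0^T\EE\abs{\phi}^p\ds$; combined with $Z_0=0$ this controls $\EE\big[\sup_{t\in[0,T]}\abs{Z_t}^p\big]$ and delivers \eqref{eq:BDG_nonconv}. I expect the only genuinely delicate point to be this last transfer: the plain Kolmogorov criterion only asserts existence of a continuous version, whereas \eqref{eq:BDG_nonconv} demands the quantitative moment bound on the supremum, so one must use the Garsia--Rodemich--Rumsey form (or simply cite \cite{zhang2010stochastic}) to carry the constant through.
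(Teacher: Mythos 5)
Your argument is correct and follows essentially the same route as the paper, which only sketches this (known) result: the same splitting of the increment to match the two terms in \eqref{eq:kernel_nonconvo_condition}, the BDG bound without supremum, H\"older with exponents $\tfrac{\gamma}{2}$ and $\tfrac{\gamma}{\gamma-2}$ followed by Jensen (which is exactly where $p>\tfrac{2\gamma}{\gamma-2}$ is needed), and then the quantitative Kolmogorov/Garsia--Rodemich--Rumsey step enabled by $\tfrac{\beta p}{\gamma}>1$. You also correctly flag the only delicate point, namely that one needs the quantitative form of the continuity criterion (as in \cite{zhang2010stochastic}) rather than the bare existence of a continuous modification.
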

%%%%%%%%%%%%%%%%%%%%%%%%%%%%%%%%%%%%%%%%%%%%%%%%%%%%%%%%%%%%%%
The condition $p>\frac{\gamma}{\beta}$ ensures that $\frac{\beta p}{\gamma}>1$ for the Kolmogorov continuity criterion to apply. On the other hand, it is also required that~$p>\frac{2\gamma}{\gamma-2}$ (the same condition as in Theorem~\ref{thm:BDG_convolution}) for Jensen's inequality to hold in the following estimate
\begin{align*}
    \EE\left(\int_s^t \abs{K(t,r)}^2 \abs{\phi(r)}^2 \dr \right)^{p/2} 
    \le  \norm{K}_{\Ll^\gamma([s,t])}^p \EE\left(\int_s^t \abs{\phi(r)}^{\frac{2\gamma}{\gamma-2}}\dr \right)^{\frac{p}{2}\frac{\gamma-2}{\gamma}}
    \le C (t-s)^{\frac{\beta p}{\gamma}} \int_0^T \EE\abs{\phi(r)}^p\dr.
\end{align*}
The power-law kernel~$K(t,s)=(t-s)^{H-\half},\,t>s,$ satisfies condition~\eqref{eq:kernel_nonconvo_condition} for all~$\gamma\in(2,\frac{2}{1-2H})$ and~$\beta=\gamma(H-\half)+1$ if~$H\in(0,\half)$. With these variables, $\frac{\gamma}{\beta}<\frac{2\gamma}{\gamma-2}$ if and only if~$\gamma<\frac{2}{1-H}$. Since~$\gamma\mapsto \frac{\gamma}{\gamma(H-\half)+1}$ is increasing while~$\gamma\mapsto \frac{2\gamma}{\gamma-2}$ is decreasing, the minimum of~$\max\big(\frac{\gamma}{\gamma(H-\half)+1},\frac{2\gamma}{\gamma-2}\big)$ is attained at~$\gamma^\ast=\frac{2}{1-H}$ and takes the value~$p^\ast=2/H$. 
For comparison, recall that the convolution BDG inequality~\eqref{eq:BDG_convolution} and Decreusefond's inequality~\eqref{eq:BDG_decreusefond} hold for any~$p>1/H$.

%%%%%%%%%%%%%%%%%%%%%%%%%

%\bibliographystyle{apalike}
\bibliographystyle{abbrv}
\bibliography{bib}
%%%%%%%%%%%%%%%%%%%%%%%%%%%%%
\end{document}